\newtheorem{theorem}{Theorem}[section]
\newtheorem*{theorem*}{Theorem}
\newtheorem{corollary}[theorem]{Corollary}
\newtheorem{proposition}{Proposition}[section]
\newtheorem{conjecture}[theorem]{Conjecture}
\def\Ric{\text{Ric}}
\def\Ric{\operatorname{Ric}}
\def\Rm{\operatorname{Rm}}
\newcommand{\eps}{{\varepsilon}}
\numberwithin{equation}{section}
\begin{document}
	\title[cross quadratic bisectional curvature]{K\"ahler manifolds and cross quadratic bisectional curvature}

\author{Lei Ni}\thanks{The research is partially supported by  ``Capacity Building for Sci-Tech Innovation-Fundamental Research Funds".   }
\address{Lei Ni. Department of Mathematics, University of California, San Diego, La Jolla, CA 92093, USA}
\email{lni@math.ucsd.edu}

\author{Fangyang Zheng} \thanks{The research of FZ is partially supported by a Simons Collaboration Grant 355557.}
\address{Fangyang Zheng. Department of Mathematics,
The Ohio State University, Columbus, OH 43210,
USA}
\email{{zheng.31@osu.edu}}

\subjclass[2010]{53C55, 53C44, 53C30}
%\subjclass[2010]{32L05, 32Q10, 32Q15, 53C55}
%\keywords{Compact complex manifolds, K\"ahler metrics, positive holomorphic sectional curvature, positive scalar
%\ curvature, projectivized vector bundles}

\begin{abstract} In this article we continue the study of the two curvature notions for K\"ahler manifolds introduced by the first named author earlier: the so-called cross quadratic bisectional curvature (CQB) and its dual ($^d$CQB). We first show that compact K\"ahler manifolds with CQB$_1>0$ or $\mbox{}^d$CQB$_1>0$ are Fano, while nonnegative CQB$_1$ or $\mbox{}^d$CQB$_1$ leads to a Fano manifold as well, provided that the universal cover does not contain a flat de Rham factor. For the latter statement we employ the K\"ahler-Ricci flow to deform the metric. We  conjecture that all K\"ahler C-spaces will have nonnegative CQB and positive $^d$CQB.  By giving irreducible such examples with arbitrarily large second Betti numbers we show that the positivity of these two curvature put no restriction on the Betti number. A strengthened  conjecture is that any K\"ahler C-space will actually have positive CQB unless it is a ${\mathbb P}^1$ bundle. Finally we give an example of non-symmetric, irreducible K\"ahler C-space with $b_2>1$ and positive CQB, as well as compact non-locally symmetric K\"ahler manifolds with CQB$<0$ and $^d$CQB$<0$.
\end{abstract}

\maketitle

\section{Introduction}

In a recent work \cite{N} by the first named author, the concept of {\em cross quadratic bisectional curvature} (denoted as CQB from now on) and its dual notion (denoted by $\mbox{}^d$CQB) for K\"ahler manifolds were introduced (they shall be defined shortly below). Both concepts are closely related to the notion of quadratic bisectional curvature (abbreviated as QB, see \cite{WuYauZheng}, \cite{CT}, \cite{Zhang}, \cite{LiWuZheng}, \cite{ChauTam}, \cite{NiTam}, and \cite{N} for the definition and results related it). One of the  reasons for the consideration of these different notions of curvature is to find suitable differential geometric characterizations for the K\"ahler C-spaces, as motivated by the generalized Hartshorne conjecture.

In \cite{N},  inspired partially by the connection between the positive orthogonal Ricci (denoted by $Ric^\perp>0$, and  studied in \cite{NZ, NWZ}) and QB $>0$,  and partially by the work of Calabi-Vesentini \cite{CV}, among other things the first named author proved that CQB $>0$ implies $Ric^\perp>0$, which leads to the vanishing of holomorphic forms and simply-connectedness of the compact K\"ahler manifolds. The positivity of $\mbox{}^d$CQB, on the other hand, leads to the vanishing of the first cohomology group of the holomorphic tangent bundle, thus the manifold must be infinitesimally rigid, i.e., without nontrivial small deformations. It is also proved in \cite{N} that, any classical K\"ahler C-space $M^n$ with $b_2=1$ and $n\geq 2$ will have positive CQB and positive $\mbox{}^d$CQB. This makes the two conditions (namely CQB$>0$ and $\mbox{}^d$CQB$>0$) better candidates  than QB in terms of describing K\"ahler C-spaces, as only about eighty percent of the above spaces have positive or nonnegative QB by the excellent work of Chau and Tam \cite{ChauTam}.

Inspired by the perspective of a curvature characterization of the K\"ahler C-spaces, in this paper we continue  the project of understanding (with the aim of classifying) compact K\"ahler manifolds with positive or nonnegative CQB (or $\mbox{}^d$CQB). Recall that by \cite{N}, on a K\"ahler manifold $(M^n,g)$, if we denote by $T'M$ and $T''M$ the holomorphic and anti-holomorphic tangent bundle of $M$, then CQB is a Hermitian quadratic form on linear maps $A: T''M \rightarrow T'M $:
\begin{equation}\label{eq:cqb1}
 \mbox{CQB}_R(A) = \sum_{\alpha , \beta =1}^n  R(A(\overline{E}_{\alpha}), \overline{A(\overline{E}_{\alpha} )}, E_{\beta} , \overline{E}_{\beta} ) - R(E_{\alpha} , \overline{E}_{\beta}, A(\overline{E}_{\alpha}), \overline{A(\overline{E}_{\beta} )} ) \end{equation}
where $R$ is the curvature tensor of $M$ and  $\{ E_{\alpha } \}$ is a unitary frame of $T'M$. The expression is independent of the choice of the unitary frame. When the meaning is clear we simply write CQB. The manifold $(M^n,g)$ is said to have positive (nonnegative) CQB, if at any point  $x\in M$,  and for any non-trivial linear map $A: T''_xM \rightarrow T'_xM$, the value $\mbox{CQB}(A)$ is positive (nonnegative). We say that $CQB_k>0$ if (\ref{eq:cqb1}) holds for all $A$ with rank no greater than $k$.

Similarly, the dual notion ($^d$CQB) introduced in \cite{N} is a Hermitian quadratic form on linear maps $A: T'M \rightarrow T''M $:
\begin{equation}\label{eq:dcqb1}
 ^d\mbox{CQB}_R(A) = \sum_{\alpha , \beta =1}^n  R(\overline{A(E_{\alpha})}, A(E_{\alpha} ), E_{\beta} , \overline{E}_{\beta} ) + R(E_{\alpha} , \overline{E}_{\beta}, \overline{  A(E_{\alpha} )} , A(E_{\beta} ) ) \end{equation}
where $R$ again is the curvature tensor of $M$ and  $\{ E_{\alpha } \}$ is a unitary frame of $T'M$.  The manifold $(M^n,g)$ is said to have positive (or nonnegative) $^d$CQB, if $^d\mbox{CQB}(A)>0$ (or $\geq 0$) at any point in $x\in M$, and for any non-trivial linear map $A: T'_xM \rightarrow T''_xM$. Related to this there is a tensor analogous to the Ricci:  $Ric^{+}(X, \overline{X})=Ric(X, \overline{X})+H(X)/|X|^2$, where $H$ is the holomorphic sectional curvature. We say that $^d$CQB$_k>0$ if (\ref{eq:dcqb1}) holds for all $A$ with rank no greater than $k$.

It is proved in \cite{N} that compact K\"ahler manifold $M^n$ with $Ric^{+}>0$ is  projective and  simply connected. Also, if $\mbox{}^d$CQB$>0$, then $H^1(M, T'M) =\{0\}$, so $M$ is locally deformation rigid. Moreover $\mbox{}^d$CQB$_1>0$ implies $Ric^{+}>0$.

 Serving as a further step of the study our first result of this article is  that the positivity of either CQB$\mbox{}_1$ or $\mbox{}^d$CQB$\mbox{}_1$ implies the positivity of the Ricci curvature, so a compact manifold with either  CQB$\mbox{}_1>0$  or $\mbox{}^d$CQB$\mbox{}_1>0$ is  Fano, answering positively a question asked in \cite{N}.

\begin{theorem}
Let $(M,g)$ be a K\"ahler manifold with either CQB$\mbox{}_1 >0$ or $\mbox{}^d\!$CQB$\mbox{}_1 >0$. Then its Ricci curvature is positive. So compact K\"ahler manifolds with positive CQB$\mbox{}_1$ or $\mbox{}^d\!$CQB$\mbox{}_1$ are Fano.
\end{theorem}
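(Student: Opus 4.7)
The plan is to test both hypotheses only on rank-one maps, rewrite the resulting quantity as a combination of Ricci and holomorphic bisectional curvatures, and then average over a unitary frame in one slot to extract positivity of the Ricci tensor.

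Any rank-one $A\colon T''_xM\to T'_xM$ takes the form $A(\bar E_\alpha)=X_\alpha Y$ for some scalars $X_\alpha\in\mathbb C$ and some nonzero $Y\in T'_xM$. Setting $\tilde X:=\sum_\alpha X_\alpha E_\alpha$, the first sum in \eqref{eq:cqb1} reduces by multilinearity to $|\tilde X|^2\Ric(Y,\bar Y)$ (since $\sum_\beta R(Y,\bar Y,E_\beta,\bar E_\beta)=\Ric(Y,\bar Y)$ and $\sum_\alpha|X_\alpha|^2=|\tilde X|^2$), while the second sum collapses to $R(\tilde X,\bar{\tilde X},Y,\bar Y)$. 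Hence
\[ \mbox{CQB}(A)\;=\;|\tilde X|^2\,\Ric(Y,\bar Y)\;-\;R(\tilde X,\bar{\tilde X},Y,\bar Y). \]
The parallel computation for the dual form, starting from the rank-one ansatz $A(E_\alpha)=\bar X_\alpha\bar Y$ in \eqref{eq:dcqb1}, produces
\[ {}^d\mbox{CQB}(A)\;=\;|\tilde X|^2\,\Ric(Y,\bar Y)\;+\;R(\tilde X,\bar{\tilde X},Y,\bar Y). \]

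Under $\mbox{CQB}_1>0$, the first identity gives $\Ric(Y,\bar Y)>R(\tilde X,\bar{\tilde X},Y,\bar Y)$ for every unit $\tilde X$ and nonzero $Y$. Taking $\tilde X=E_\alpha$ and summing the $n$ strict inequalities over $\alpha$, the bisectional terms telescope via $\sum_\alpha R(E_\alpha,\bar E_\alpha,Y,\bar Y)=\Ric(Y,\bar Y)$, leaving $(n-1)\Ric(Y,\bar Y)>0$. Under ${}^d\mbox{CQB}_1>0$ the same averaging yields $(n+1)\Ric(Y,\bar Y)>0$. Since $n=1$ is trivial (both forms vanish identically on rank-one maps in complex dimension one), for $n\ge 2$ one concludes $\Ric>0$ in either case, and on a compact K\"ahler manifold this forces $c_1(M)>0$, so $M$ is Fano.

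The argument has no genuine obstacle: its decisive step is the realization that already the rank-one positivity of $\mbox{CQB}$ (or of its dual), combined with the averaging identity $\sum_\alpha R(E_\alpha,\bar E_\alpha,\cdot,\cdot)=\Ric$, suffices to propagate all the way to positivity of Ricci. The only point that calls for care is the multilinearity bookkeeping identifying the second sum of each definition with a bisectional curvature evaluated on $\tilde X$.
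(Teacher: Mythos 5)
Your proof is correct and follows essentially the same route as the paper's: test positivity on rank-one maps to obtain $|X|^2\Ric(Y,\overline{Y})-R(X,\overline{X},Y,\overline{Y})>0$ (resp.\ with a $+$ sign for the dual), then average over a unitary frame in the $X$-slot. The only differences are cosmetic: by including the index with $E_\alpha$ parallel to $Y$ you get $(n-1)\Ric(Y,\overline{Y})>0$ self-containedly, whereas the paper sums only over the orthogonal directions and then invokes $\Ric^\perp>0$ from \cite{N}; also, your parenthetical that $^d$CQB vanishes identically on rank-one maps when $n=1$ is inaccurate (it equals $2|x|^2|y|^2R_{1\overline{1}1\overline{1}}$), but this is harmless since the averaging gives $(n+1)\Ric>0$ there anyway.
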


As a corollary, the above theorem implies that a product K\"ahler manifold will have positive (or nonnegative) CQB$\mbox{}$ or $\mbox{}^d$CQB$\mbox{}$  if and only if each of its factors is so:

\begin{corollary}\label{coro:1}
Let $M=M_1\times M_2$ be a product K\"ahler manifold. Then $M$ has CQB$\mbox{} >0$ (or $\ge 0$) if and only if both $M_1$ and $M_2$ are so. Also, for any positive integer $k$, $M$ has CQB$_k\mbox{} >0$ (or $\ge 0$) if and only if both $M_1$ and $M_2$ are so. The same statements hold for $\mbox{}^d\!$CQB or $\mbox{}^d\!$CQB$\mbox{}_k$ as well.
\end{corollary}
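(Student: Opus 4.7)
The plan is to reduce CQB on the product to CQB on the factors plus manifestly nonnegative Ricci terms, and then invoke Theorem~1.1. Fix a unitary frame $\{E_\alpha\}$ adapted to the splitting: $E_1,\dots,E_{n_1}$ tangent to $M_1$, and $E_{n_1+1},\dots,E_n$ tangent to $M_2$. Write any $A\colon T''M\to T'M$ as a $2\times 2$ block matrix $A_{ij}\colon T''M_j\to T'M_i$. Since on a product $R_M(X,\overline Y,Z,\overline W)$ vanishes whenever two of its inputs lie in distinct factors, the first (Ricci-type) sum in~\eqref{eq:cqb1} splits into Ricci traces along each factor, while the second sum forces $E_\alpha$ and $E_\beta$ to lie in the same factor and thus only sees the diagonal blocks. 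I therefore expect the clean identity
\begin{equation}\label{eq:cqb-prod}
\mbox{CQB}_M(A)=\mbox{CQB}_{M_1}(A_{11})+\mbox{CQB}_{M_2}(A_{22})+T_1(A_{12})+T_2(A_{21}),
\end{equation}
where $T_i(B):=\sum_\alpha \Ric_{M_i}(B(\overline E_\alpha),\overline{B(\overline E_\alpha)})$ (the sum running over those $\alpha$ with $\overline E_\alpha$ in the source of $B$).

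Once \eqref{eq:cqb-prod} is in hand, both directions fall out. For each factor inheriting from $M$, extend a nonzero $A_i$ on $M_i$ by zero to $A$ on $M$: then \eqref{eq:cqb-prod} collapses to $\mbox{CQB}_M(A)=\mbox{CQB}_{M_i}(A_i)$, forcing the required sign. For the converse, invoke Theorem~1.1 (together with its nonstrict analogue, obtained by averaging the rank-one CQB$_1$ inequality over a unitary basis to deduce $(n-1)\Ric\ge 0$) to upgrade CQB$\ge 0$ on each factor to $\Ric_{M_i}\ge 0$, so that the $T_i$ terms are nonnegative and $\mbox{CQB}_M(A)\ge 0$. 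Strict positivity when $A\ne 0$ follows by cases: if some diagonal $A_{ii}\ne 0$, then $\mbox{CQB}_{M_i}(A_{ii})>0$ by hypothesis; otherwise an off-diagonal block is nonzero, and strict positivity of $\Ric_{M_i}$ (again from Theorem~1.1) forces the corresponding $T_i$ to be strictly positive. The rank-$k$ statement is automatic: each block of $A$ has rank at most $\operatorname{rank}A\le k$. The $\mbox{}^d\!$CQB case is handled by the entirely parallel decomposition of~\eqref{eq:dcqb1} using blocks $A_{ij}\colon T'M_j\to T''M_i$.

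The main obstacle is the bookkeeping required to derive~\eqref{eq:cqb-prod}: in the first sum of~\eqref{eq:cqb1} the cross products of the $M_1$- and $M_2$-parts of $A(\overline E_\alpha)$ must be shown to vanish against $R_M$, so that the off-diagonal blocks contribute only the Ricci traces $T_i$; and in the second sum one must verify that every cross-factor pair $(\alpha,\beta)$ is killed by the product structure, with the complementary projection of $A(\overline E_\alpha)$ also dropping out. Once this algebraic identity is established, the corollary reduces to a direct case analysis powered by Theorem~1.1.
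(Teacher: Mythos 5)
Your proposal matches the paper's proof essentially verbatim: the same block decomposition of $A$ in a product-adapted unitary frame, the same identity expressing $\mathrm{CQB}_M(A)$ as $\mathrm{CQB}_{M_1}(A_{11})+\mathrm{CQB}_{M_2}(A_{22})$ plus Ricci traces of the off-diagonal blocks, and the same appeal to Theorem~1.1 to control those Ricci terms. The argument is correct as far as the paper's own proof goes.
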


By deforming the metric via the K\"ahler-Ricci flow we further show that  if $M$ has CQB$\mbox{}_1\ge 0$  (or $\mbox{}^d$CQB$\mbox{}_1\ge 0$) and its universal cover does not contain a flat de Rham factor then $M$ is Fano as well.  Note that the finiteness of the fundamental group of $M$ implies the nonexistence of the flat de Rham factor. Namely in particular, if $M$ has CQB$\mbox{}_1\ge 0$  (or $\mbox{}^d$CQB$\mbox{}_1\ge 0$) and $\pi_1(M)$ is finite, then $M$ is a  Fano manifold:

\begin{theorem}\label{thm:fano}
Let $(M,g)$ be a compact K\"ahler manifold with CQB$\mbox{}_1\ge 0$  (or $\mbox{}^d$CQB$\mbox{}_1\ge 0$) and its universal cover does not contain a flat de Rham factor. Then $M$ is Fano. In fact, the K\"ahler-Ricci flow will evolve the metric $g$ to ones with positive Ricci curvature.
\end{theorem}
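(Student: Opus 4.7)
The plan is to deform $g$ by the (unnormalized) K\"ahler-Ricci flow $\partial_t g(t) = -\mathrm{Ric}(g(t))$ and to show that $\mathrm{CQB}_1(g(t))$ (resp.~${}^d\mathrm{CQB}_1(g(t))$) becomes strictly positive for every small $t>0$; by Theorem 1.1 this yields $\mathrm{Ric}(g(t))>0$, whence $[\mathrm{Ric}(g(t))]=2\pi c_1(M)$ is a positive class and $M$ is Fano.

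The first ingredient is a pointwise algebraic rigidity. For a rank-$1$ operator $A:T''M\to T'M$, write $A(\bar E_\alpha)=\bar w^\alpha v$ with unit $v,w\in T'M$ and set $\eta:=\sum_\alpha \bar w^\alpha E_\alpha\in T'M$. A direct expansion of (\ref{eq:cqb1}) gives
\[
\mathrm{CQB}_1(A)\,=\,\mathrm{Ric}(v,\bar v)\,-\,R(\eta,\bar\eta,v,\bar v).
\]
Summing over a unitary basis of $\eta$ yields $(n{-}1)\,\mathrm{Ric}(v,\bar v)$, so $\mathrm{CQB}_1\ge 0$ forces $\mathrm{Ric}\ge 0$; moreover if $\mathrm{Ric}(v,\bar v)=0$ at a point together with $\mathrm{CQB}_1\ge 0$, then every bisectional $R(\eta,\bar\eta,v,\bar v)\le 0$ while their sum over $\eta$ equals zero, so they all vanish. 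Consequently, any parallel holomorphic subbundle of $T'M$ along which $\mathrm{Ric}\equiv 0$ gives rise to a \emph{flat} de Rham factor. The analogous identity ${}^d\mathrm{CQB}_1(A)=\mathrm{Ric}(v,\bar v)+R(\eta,\bar\eta,v,\bar v)$ yields the same conclusion in the ${}^d\mathrm{CQB}$ case.

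The main analytic step is to prove that the cone $\{\mathrm{CQB}_1\ge 0\}$ (resp.~$\{{}^d\mathrm{CQB}_1\ge 0\}$) is preserved by the K\"ahler-Ricci flow, via Hamilton's maximum principle for tensorial evolutions in the Brendle-Schoen form. This reduces to verifying the non-negativity of the reaction term in the evolution equation of the rank-$1$ restricted quadratic form on the boundary of the cone, and parallels the Bando-Mok preservation of nonneg bisectional and the Chau-Tam preservation of QB. The strong maximum principle then gives the dichotomy at $t_0>0$: either $\mathrm{CQB}_1(g(t_0))>0$ strictly --- so by Theorem 1.1 $\mathrm{Ric}(g(t_0))>0$ and we are done --- or the set of null rank-$1$ operators generates a non-trivial parallel holomorphic distribution $\mathcal D\subset T'M$, producing a de Rham splitting $\tilde M=\tilde M_0\times\tilde M_1$ of the universal cover.

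The pointwise identity forces $\tilde M_0$ to be Ricci-flat, hence (inheriting $\mathrm{CQB}_1\ge 0$) flat at $t_0$; a flat K\"ahler metric is a stationary point of KRF, and by Kotschwar's backward uniqueness together with the fact that KRF preserves parallel subbundles, the splitting descends to $t=0$ and $\tilde M_0$ is a flat de Rham factor of $(\tilde M, g(0))$, contradicting the hypothesis. Therefore $\mathrm{CQB}_1(g(t))>0$ for all small $t>0$ and $M$ is Fano. The main obstacle is the preservation of $\mathrm{CQB}_1\ge 0$ under KRF: the reaction term lacks a manifestly definite sign and requires a delicate algebraic analysis exploiting the rank-$1$ constraint, not directly reducible to existing preservation results for QB or bisectional curvature.
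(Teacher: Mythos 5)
Your proposal hinges on proving that the cone $\{\mathrm{CQB}_1\ge 0\}$ is itself invariant under the K\"ahler--Ricci flow, and you candidly flag the verification of the sign of the reaction term as "the main obstacle." That obstacle is precisely the gap: no such invariance is established (nor is it known), and without it the strong maximum principle dichotomy, the parallel null distribution, and the backward-uniqueness step all have nothing to stand on. The paper deliberately does \emph{not} try to preserve $\mathrm{CQB}_1\ge 0$. Instead, following B\"ohm--Wilking, it constructs a time-dependent, \emph{enlarging} family of convex sets $C(t)$ cut out by three conditions: $\Ric\ge 0$; the pinching inequality $\bigl|\Ric(X,\overline Y)-R_{X\overline Y Z\overline Z}\bigr|^2\le (D_1+tE_1)\,\Ric(X,\overline X)\Ric(Y,\overline Y)$; and a curvature bound $\|R\|\le D_2+tE_2$. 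The point is that $\mathrm{CQB}_1\ge 0$ implies membership in $C(0)$ (the middle condition with $D_1=(n-1)^2$ follows from the nonnegativity of the Hermitian form $\Ric(\cdot,\cdot)-R_{\cdot\,\cdot Z\overline Z}$ via Cauchy--Schwarz), and the slack terms $tE_1$, $tE_2$ can be chosen large enough that the ODE $\frac{d}{dt}\Rm=\Rm^2+\Rm^{\#}$ cannot push the curvature out of $C(t)$ on a short time interval --- a quantitative estimate of the reaction terms that is genuinely carried out, rather than a sign condition on the boundary of the original cone. Membership in $C(t)$ still forces the kernel of $\Ric$ to lie in the kernel of $R$, so a null direction of $\Ric(g(t))$ for a sequence $t_i\to 0$ splits off a flat de Rham factor of the universal cover, contradicting the hypothesis.

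Your pointwise rigidity identity $\mathrm{CQB}_1(A)=|X|^2\Ric(Y,\overline Y)-R(X,\overline X,Y,\overline Y)$ for rank-one $A$ is correct and is exactly the paper's starting point (it is Corollary 2.5 there), and your reduction "kernel of $\Ric$ $\Rightarrow$ kernel of $R$ $\Rightarrow$ flat factor" is the right endgame. But as written the argument is incomplete at its central analytic step; to repair it you would either have to actually prove the null-vector condition for $\{\mathrm{CQB}_1\ge 0\}$ (a delicate and open algebraic question), or replace that cone by a flow-invariant time-dependent enlargement as the paper does.
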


To prove this we adopt a nice  technique of B\"ohm-Wilking \cite{BW} of deforming the metric via the K\"ahler-Ricci flow into $g(t)$ with positive Ricci curvature to our curvature conditions. In \cite{BW}, the authors deformed a Riemannian metric with nonnegative sectional curvature (also assuming finiteness of the fundamental group) into one with positive Ricci via the Ricci flow. Since  CQB$\mbox{}_1\ge0$  (or $\mbox{}^d$CQB$\mbox{}_1\ge 0$) is different from the sectional curvature being nonnegative,  a different collection of invariant time-dependent convex sets is constructed to serve the purpose. We also need  somewhat different estimates to show that $\Ric(g(t))>0$ for $t>0$, where $g(t)$ is a short time solution of the K\"ahler-Ricci flow. In  fact our curvature conditions here are much weaker than the bisectional curvature being nonnegative (which is weaker than the sectional curvature), in view of the result of Mok \cite{Mok} asserting  that the nonnegativity of bisectional curvature implies that the irreducible K\"ahler manifold is locally Hermitian symmetric, and that the first author proved in \cite{N} that all classical K\"ahler $C$-spaces with $b_2=1$ admits Einstein metrics with CQB$>0$ and $\mbox{}^d$CQB$>0$ (see also further examples with $b_2>1$ in this paper).

As suggested by Professor Richard Hamilton, the condition CQB$\geq 0$ and $^d$CQB$\geq 0$ have their analogous versions on Riemannian manifolds, and the above theorem also holds in that case. See \S 3 for more details.

By the structure theorem for compact K\"ahler manifolds with nonnegative Ricci \cite{CDP}, we have the following:

\begin{corollary}\label{coro:2}
Let $(M,g)$ be a compact K\"ahler manifold with CQB$\mbox{}_1\ge 0$  (or $\mbox{}^d$CQB$\mbox{}_1\ge 0$). Then there exists a finite cover of $M'$ of $M$, such that $M'$ is a holomorphic and metric fiber bundle over its Albanese variety, which is a flat complex torus, with the fiber being a Fano manifold.
\end{corollary}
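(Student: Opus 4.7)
The plan is to bootstrap from Theorem \ref{thm:fano} and apply the structure theorem of Campana--Demailly--Peternell \cite{CDP} for compact K\"ahler manifolds with $\Ric\ge 0$. First, I would invoke Theorem \ref{thm:fano}: for $(M,g)$ compact K\"ahler satisfying CQB$\mbox{}_1\ge 0$ or $\mbox{}^d$CQB$\mbox{}_1\ge 0$, the K\"ahler-Ricci flow evolves $g$ to a new K\"ahler metric $g(t)$, $t>0$ small, with $\Ric(g(t))\ge 0$ everywhere on $M$ and strictly positive in the directions transverse to the flat de Rham factor of the universal cover.

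Second, I would apply the CDP structure theorem to $(M,g(t))$. It yields a finite \'etale cover $\pi:M'\to M$ whose Albanese map $M'\to\mathrm{Alb}(M')$ is a smooth holomorphic and Riemannian fiber bundle over the flat complex torus $\mathrm{Alb}(M')$, with compact simply-connected fiber $F$ carrying the induced K\"ahler metric. Correspondingly the universal cover $\widetilde{M}$ splits isometrically as $\C^k\times F$, where $k=\dim_{\C}\mathrm{Alb}(M')$.

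Third, it remains to show that $F$ is Fano. By construction $F$ is the non-flat factor in the de Rham decomposition of $\widetilde{M}$, so it carries no further flat de Rham summand. The conditions CQB$\mbox{}_1\ge 0$ and $\mbox{}^d$CQB$\mbox{}_1\ge 0$ are tensorial expressions in the Riemann tensor and split orthogonally under a de Rham product, hence they descend from $\widetilde{M}$ to $F$; this is immediate by extending a rank-one test map $A$ on $F$ by zero on the $\C^k$ factor and observing that the extra indices contribute nothing in (\ref{eq:cqb1}) or (\ref{eq:dcqb1}) since the corresponding curvature components vanish. As $F$ is compact and simply-connected, a second application of Theorem \ref{thm:fano} gives that $F$ is Fano. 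The main subtlety therefore lies in Theorem \ref{thm:fano} itself; granting it, the corollary is essentially a direct combination of that result with \cite{CDP}, the only point requiring verification being that CDP's bundle fiber coincides with the non-flat de Rham factor, which is standard since the Albanese of $M'$ has dimension equal to that of the flat factor in the Cheeger--Gromoll/de Rham decomposition of $(\widetilde{M},g(t))$.
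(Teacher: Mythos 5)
Your proposal is correct and takes essentially the same route as the paper: apply the Campana--Demailly--Peternell structure theorem \cite{CDP} to get the Albanese fiber bundle, observe that CQB$\mbox{}_1\ge 0$ (resp.\ $\mbox{}^d$CQB$\mbox{}_1\ge 0$) passes to the non-flat de Rham factor by testing with maps supported on that factor, and then apply Theorem \ref{thm:fano} to the fiber to conclude it is Fano. The only (harmless) detour is running the K\"ahler--Ricci flow \emph{before} invoking \cite{CDP}: Theorem \ref{thm:21} already gives $\Ric(g)\ge 0$ pointwise for the original metric, so the structure theorem and the metric product splitting can be taken with respect to $g$ itself (cf.\ Theorem \ref{thm:24}, which rules out the Calabi--Yau factor without the flow), and the flow is needed only where you use it in your final step.
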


Note that for a compact K\"ahler manifold with nonnegative QB, any harmonic $(1,1)$ form is parallel, and the positivity of QB implies that $b_2=1$. The positivity/nonnegativity of CQB or $\mbox{}^d$CQB seems not to put any restrictions on $b_2$ (see Theorem \ref{thm:ex1} below).
However, since  CQB$>0$ implies positive Ric$^{\perp}$ by \cite{N}, while ${\mathbb P}^1$ bundles do not admit any K\"ahler metric with positive $\mbox{Ric}^{\perp}$ by \cite{NWZ}, so for K\"ahler C-spaces with $b_2>1$, we could only hope for nonnegative CQB instead of positive CQB in general. We propose the following:

\begin{conjecture}\label{eq:necessary}
Any K\"ahler C-space will have nonnegative CQB and positive $^d\!$CQB.
\end{conjecture}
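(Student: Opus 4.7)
The plan is to reduce to the irreducible case via Corollary~\ref{coro:1}, and then to establish the conjecture on each irreducible K\"ahler C-space $M = G/P$ by direct computation in the root-theoretic description of its canonical K\"ahler-Einstein metric.

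Fix $M = G/P$ irreducible with $G$ a compact simple Lie group. Choose a maximal torus $T \subset P$ and a compatible positive system of roots, and let $\Phi_M^+$ denote the set of positive roots whose root vectors lie in the complement $\mathfrak{m}$ of $\mathfrak{p}$ inside $\mathfrak{g}$. With a suitably normalized choice of root vectors $E_\alpha$ one obtains an orthonormal frame of $T'M|_{eP}$ indexed by $\Phi_M^+$, and a general map $A : T''_{eP}M \to T'_{eP}M$ is parametrized by coefficients via $A(\bar{E}_\alpha) = \sum_\beta a_{\alpha\beta} E_\beta$. Substituting into (\ref{eq:cqb1}), and using that for the normal K\"ahler-Einstein metric the components $R(E_\alpha, \bar{E}_\beta, E_\gamma, \bar{E}_\delta)$ are explicit polynomials in the structure constants $N_{\alpha,\beta}$ of $\mathfrak{g}_{\mathbb{C}}$ that vanish unless $\alpha - \beta = \gamma - \delta$, should collapse CQB$(A)$ to a finite combinatorial expression organized by the admissible ``difference'' $\alpha - \beta$.

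The decisive step is to rearrange this quadratic form as a manifest sum of squares. I would group summands by each admissible difference and, using Jacobi identities among the $N_{\alpha,\beta}$, look for a block-diagonal structure in which each block is nonnegative-definite. For $^d$CQB one would carry out the analogous analysis on (\ref{eq:dcqb1}) to obtain nonnegativity, and strict positivity would be read off the same block structure by verifying that each block is in fact positive-definite. A sanity check for the $^d$CQB computation is that any null direction must be compatible with the Bochner-type quantity in \cite{N} controlling $H^1(M,T'M)$, and this cohomology group is known to vanish on any rational homogeneous manifold by Bott, which is consistent with the desired strict positivity.

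The hard part is the combinatorial identity underpinning the sum-of-squares rewriting. For $b_2 = 1$ this was handled uniformly in \cite{N}, but once several simple roots are deleted to form $P$ the K\"ahler-Einstein metric carries multiple distinct scaling factors on the various strata of $\Phi_M^+$, and no uniform rearrangement is apparent; one is likely forced into a case-by-case analysis over painted Dynkin diagrams, in the spirit of the Chau-Tam treatment \cite{ChauTam} of QB. The sharpened conjecture stated in the introduction (positive CQB unless $M$ is a $\mathbb{P}^1$-bundle) should then emerge from pinpointing precisely the painted diagrams whose root combinatorics admits a null direction; since $\mathbb{P}^1$-bundles do not carry positive $\text{Ric}^{\perp}$ by \cite{NWZ}, and CQB$>0$ implies positive $\text{Ric}^{\perp}$ by \cite{N}, the necessity of such null directions on the $\mathbb{P}^1$-bundle cases is already forced by external considerations.
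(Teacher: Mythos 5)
There is a genuine gap, and it sits exactly where you flag it: the ``decisive step'' of rearranging the quadratic form into a manifest sum of nonnegative blocks is not carried out, and no mechanism is offered for why such a rearrangement should exist once several simple roots are painted and the Einstein scaling factors $g_\beta$ become non-uniform. What you have written is a program, not a proof. Be aware that the statement is posed in the paper as Conjecture~\ref{eq:necessary} and is \emph{not} proved there either: the paper only supplies evidence, namely Theorem~\ref{thm:ex1} (the full flag manifolds $SU(r+1)/{\mathbb T}$ with the K\"ahler--Einstein metric have CQB $\geq 0$ and $^d$CQB $>0$) and Theorem~\ref{thm:ex2} (the single space $SU(6)/S(U(2)\times U(2)\times U(2))$ has both curvatures positive). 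Those are established in Section~4 by precisely the method you outline --- the Itoh--Lohove curvature formulas in a Chevalley frame, with $R_{\alpha\bar\beta\gamma\bar\delta}=0$ unless $\alpha+\gamma=\beta+\delta$, followed by an explicit Cauchy--Schwarz domination of the two families of cross terms $X$ and $Y$ by $\mu\|A\|^2$ --- but only for Type $A$ root systems, where the structure constants $N_{\alpha,\beta}\in\{0,\pm1\}$ and the coefficients $\sqrt{g_{ip}g_{qk}}/(g_{ik}\sqrt{g_{iq}g_{pk}})$ can be bounded by hand. So your plan reproduces the paper's strategy for its examples but does not close the general case, and neither does the paper.

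Two smaller points. First, your ``sanity check'' via Bott vanishing of $H^1(M,T'M)$ runs the implication backwards: $^d$CQB $>0$ implies $H^1(M,T'M)=0$, but the known vanishing of that cohomology on rational homogeneous spaces gives no information about the sign of $^d$CQB, so it cannot certify positive-definiteness of your blocks. Second, the reduction to the irreducible case via Corollary~\ref{coro:1} and Corollary~\ref{coro:22} is fine, but note that ${\mathbb P}^1$ factors force CQB to be only nonnegative (it vanishes identically when $n=1$), which is consistent with the conjecture as stated but means any sum-of-squares decomposition you seek for CQB must admit null directions on the ${\mathbb P}^1$-bundle cases; identifying and controlling those kernels is part of the unresolved combinatorial work, not a consequence of the external facts from \cite{N} and \cite{NWZ} you cite.
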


A slightly  weaker statement would be: any K\"ahler-Einstein C-space will have nonnegative CQB and positive  $\mbox{}^d\!$CQB. As a supporting evidence to Conjecture \ref{eq:necessary}, we prove the following:

\begin{theorem}\label{thm:ex1}
There are irreducible K\"ahler C-spaces with arbitrarily large $b_2$ which have nonnegative CQB and positive $\mbox{}^d\!$CQB.
\end{theorem}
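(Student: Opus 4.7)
The plan is to exhibit an explicit infinite family of irreducible K\"ahler C-spaces with $b_2\to\infty$ and verify the two curvature positivity conditions on each member by direct root-theoretic computation. A natural candidate is the family $M_n = \mathrm{SU}(n+1)/T$ of complete flag manifolds of type $A_n$, equipped with the canonical K\"ahler-Einstein metric; here $b_2(M_n)=n$ grows without bound, each $M_n$ is irreducible for $n\geq 2$, and the holomorphic tangent space at the base point decomposes as $\bigoplus_{\alpha}\mathbb{C}\, E_\alpha$ over the positive roots $\alpha$ of $A_n$, giving a canonical unitary frame that diagonalizes most of the curvature tensor. Should the combinatorics of the full flag turn out to be unfavorable for some $A$, one may substitute a series of partial flag manifolds in which the ``crossed'' Dynkin nodes are well separated; the essential point is only that the rank of $G$ be unbounded.

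First I would record the components of the curvature tensor in the root frame using the standard Koszul--Nomizu formulas for canonical metrics on C-spaces. The diagonal bisectional components $R(E_\alpha,\overline{E}_\alpha,E_\beta,\overline{E}_\beta)$ are nonnegative multiples of the Cartan inner product $\langle \alpha,\beta\rangle$, while the off-diagonal components $R(E_\alpha,\overline{E}_\beta,E_\gamma,\overline{E}_\delta)$ vanish unless the root difference $\alpha-\beta=\gamma-\delta$ is itself a nonzero root, in which case they are quadratic in the Lie bracket structure constants $N_{\alpha,\beta}$. Substituting a linear map $A=(a_{\alpha\beta})\colon T''M\to T'M$ into the definitions (\ref{eq:cqb1}) and (\ref{eq:dcqb1}) then reduces CQB$(A)$ and $\mbox{}^d$CQB$(A)$ to explicit Hermitian quadratic forms in the entries of $A$ weighted by root-theoretic data.

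For $\mbox{}^d$CQB the cross terms carry a plus sign, so positivity should follow from a spectral estimate together with the fact, proved in \cite{N}, that $\mbox{}^d$CQB$_1>0$ on each classical C-space with $b_2=1$: one rearranges the double sum into contributions indexed by the irreducible isotropy sub-blocks and observes that each block contributes positively. For CQB the cross term has a minus sign, and the main task is a Cauchy--Schwarz absorption estimate: for each quadruple $(\alpha,\beta,\gamma,\delta)$ of positive roots with $\alpha-\beta=\gamma-\delta$ a fixed nonzero root, the negative off-diagonal contribution is paired with the diagonal contributions coming from the bisectional curvatures in the directions $\{\alpha,\gamma\}$ and $\{\beta,\delta\}$, and one argues that it is dominated by the latter.

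The main obstacle I foresee is controlling the off-diagonal sum in the CQB estimate for high-rank $A$ when $b_2>1$: several isotropy sub-blocks then interact, and the negative cross terms could a priori accumulate. I expect this to be the crux of the calculation, depending in an essential way on the root-addition combinatorics of the chosen classical type. If the argument for the full flag manifold encounters difficulties, I would pass to a subfamily of partial flag manifolds in which each crossed node bounds only a short chain of uncrossed simple roots; in such a subfamily the off-diagonal contributions decouple while $b_2$ still tends to infinity, which is enough to conclude.
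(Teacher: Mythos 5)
Your choice of example and overall strategy coincide with the paper's: the authors also take the type $A$ full flag manifolds $SU(r+1)/{\mathbb T}$ with the K\"ahler--Einstein metric (so $b_2=r$ is unbounded), compute the curvature in the Chevalley root frame via the Itoh--Lohove formulas, and absorb the off-diagonal cross terms into $\mu\|A\|^2$ by Cauchy--Schwarz. So the route is the same; the issue is that your proposal stops exactly where the proof actually lives.

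Two concrete points. First, your description of the curvature is inaccurate in a way that matters: the diagonal components $R_{\alpha\bar\alpha\gamma\bar\gamma}$ are \emph{not} nonnegative multiples of $\langle\alpha,\gamma\rangle$. In the type $A$ K\"ahler--Einstein case one gets $R_{\alpha\bar\alpha\gamma\bar\gamma}=-1/g_{ik}$ whenever $\gamma=\alpha_{ij}$, $\alpha=\alpha_{jk}$ (the term $\frac{g_\alpha g_\gamma}{g_{\alpha+\gamma}}z_{\alpha+\gamma}N^2_{\alpha,\gamma}$ enters with a sign that makes these bisectional curvatures negative). These negative diagonal terms compete with the cross terms for the same budget $\mu\|A\|^2=2\|A\|^2$, and the paper's count is tight: fixing $i<k$ with $k-i=t+1$ and writing $Z_p=A_{ip,pk}$, the cross-term bound contributes $-\frac{4(t-1)}{t+1}|Z_p|^2$ and the negative bisectional terms contribute $-\frac{4}{t+1}|Z_p|^2$, for a total of $-\frac{4t}{t+1}|Z_p|^2>-4|Z_p|^2$, which just barely fits under the coefficient $4$ with which $|Z_p|^2$ appears in $\mu\|A\|^2$. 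Indeed CQB is only nonnegative, not positive (taking $A$ supported on the simple-root diagonal gives $\langle Q(A),\bar A\rangle=2\|A\|^2$), so any argument that treats the absorption as routine will either fail or prove too much. Your ``one argues that it is dominated by the latter'' is precisely the step that cannot be waved through. Second, your fallback to partial flags with well-separated crossed nodes is unnecessary --- the full flag works --- but if invoked it would require its own curvature computation, since the components $R_{\alpha\bar\alpha\gamma\bar\gamma}$ and the coefficients $g_\beta$ change with $\Phi$. To make the proposal a proof you must write down the quadratic form $\langle Q(A),\bar A\rangle$ explicitly (as in the paper's $(4.7)$), identify the two families of cross terms, and verify the sharp numerical inequalities above.
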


  To prove this result as an initial study towards the conjecture, we look into the simplest kind of irreducible K\"ahler C-spaces with $b_2>1$, namely, Type $A$ flag manifolds: $M^n = SU(r+1)/ {\mathbb  T}$, where ${\mathbb T}$ is a maximal torus in $SU(r+1)$. The complex dimension is $n= \frac{1}{2}r(r+1)$ and $b_2=r$. Equip $M^n$  with the K\"ahler-Einstein metric $g$, we show that it has nonnegative CQB and positive $\mbox{}^d$CQB. This answers negatively another question asked in \cite{N} regarding $b_2$.

As shown in \cite{NWZ}, any ${\mathbb P}^1$ bundle cannot admit a K\"ahler metric with positive
 orthogonal Ricci curvature, thus cannot have positive CQB. We speculate that {\it
any K\"ahler C-space which is not a ${\mathbb P}^1$ bundle has a metric with positive CQB.}

For compact Hermitian symmetric spaces, this speculation holds true (see Corollary 2.3 in the next section). For non-symmetric K\"ahler C-spaces, result below gives at least one example of irreducible K\"ahler C-space of $b_2>1$ with positive CQB. Such a space is necessarily  not a ${\mathbb P}^1$ bundle.

Consider irreducible K\"ahler C-spaces of Type $A$ in general, namely, $SU(r+1)/K$, where $K$ is the centralizer of some sub-torus of ${\mathbb T}$. The smallest dimensional such space which is not a ${\mathbb P}^1$ bundle and not symmetric is $M^{12} = SU(6)/S(U(2)\!\times\! U(2) \! \times \! U(2))$. It has $b_2=2$. Equip it with the K\"ahler-Einstein metric, we show that it indeed has positive CQB:

\begin{theorem}\label{thm:ex2}
Let $M^{12}=$ $SU(6)/S(U(2)\times U(2)  \times  U(2))$ be the irreducible K\"ahler C-space which is non-symmetric, with $b_2=2$, and equip it with the K\"ahler-Einstein metric. Then it has positive CQB and positive $^d\!$CQB.
\end{theorem}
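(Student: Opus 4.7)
The plan is to exploit the root-system description of $M^{12}=SU(6)/S(U(2)\times U(2)\times U(2))$ to reduce the positivity of CQB and $^d$CQB to an explicit (albeit large) quadratic-form computation at one point, and then to use the symmetry of the flag to control the cross terms.

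First, I would set up the algebraic data at the base point. Write the reductive decomposition $\mathfrak{su}(6)=\mathfrak{k}\oplus\mathfrak{m}$ where $\mathfrak{k}=\mathfrak{s}(\mathfrak{u}(2)\oplus\mathfrak{u}(2)\oplus\mathfrak{u}(2))$. Complexifying gives a splitting $\mathfrak{m}^{\mathbb C}=\mathfrak{m}^+\oplus\mathfrak{m}^-$ under the $\operatorname{Ad}(K)$-invariant complex structure, and $\mathfrak{m}^+\cong T'_oM$ is spanned by the root vectors $E_{ij}$ attached to roots $\alpha_{ij}$ with $i<j$ coming from distinct blocks $B_1=\{1,2\},\ B_2=\{3,4\},\ B_3=\{5,6\}$. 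These twelve roots split into three isotropy-irreducible pieces $V_{ab}$ of complex dimension four, corresponding to pairs of blocks $(B_a,B_b)$ with $a<b$. The K\"ahler--Einstein metric $g$ restricts to each $V_{ab}$ as a constant multiple $c_{ab}$ of the Killing form; the three constants are determined (up to overall scale) by the Koszul form, i.e.\ by the requirement $\operatorname{Ric}=\lambda g$, and can be read off from standard K\"ahler C-space formulas.

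Second, I would compute the curvature tensor $R$ at $o$ using the invariant-metric formula
\[
R(X,Y,Z,W)=\tfrac12\langle[[X,Y]_{\mathfrak{k}},Z],W\rangle+\tfrac14\langle[X,Y]_{\mathfrak{m}},[Z,W]_{\mathfrak{m}}\rangle-\tfrac14\langle[X,Z]_{\mathfrak{m}},[Y,W]_{\mathfrak{m}}\rangle-\tfrac14\langle[X,W]_{\mathfrak{m}},[Y,Z]_{\mathfrak{m}}\rangle,
\]
with structure constants $[E_{ij},E_{kl}]=N_{ij,kl}E_{ij+kl}$ arising from $\mathfrak{sl}(6,\mathbb C)$. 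The key point is that every nonzero component $R(E_{ij},\overline E_{kl},E_{pq},\overline E_{rs})$ is root-graded: it vanishes unless $\alpha_{ij}-\alpha_{kl}+\alpha_{pq}-\alpha_{rs}=0$. This severely restricts which cross-terms appear.

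Third, I would parametrize $A\colon T''_oM\to T'_oM$ by a $12\times 12$ matrix $A=(a_{ij,kl})$, blocked by the three isotropy pieces. Substituting into (1.1) and using the root-grading, the Hermitian form $\operatorname{CQB}(A)$ decouples as a sum $\sum_\mu Q_\mu(A)$ indexed by root differences $\mu=\alpha_{ij}-\alpha_{kl}$. Each $Q_\mu$ is a quadratic form on a small subspace of matrix entries, so positivity of CQB reduces to checking that each $Q_\mu\ge 0$ and that equality on all $Q_\mu$ forces $A=0$. The diagonal ($\mu=0$) part is immediately controlled by $\operatorname{Ric}^\perp>0$, which holds on any K\"ahler--Einstein C-space and provides the leading positive contribution; the off-diagonal pieces must be dominated by this leading part, and here the precise values of the $c_{ab}$ enter decisively.

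The main obstacle is the bookkeeping: with twelve root vectors and three isotropy types there are a priori hundreds of curvature components and many coupling terms in the $Q_\mu$'s, so verifying positivity by brute force is unwieldy. I would tame this by exploiting the residual symmetry group $W_0=S_3\ltimes(SU(2)^3/\text{center})$ that acts on the configuration of blocks: averaging $A$ over $W_0$ shows that it suffices to prove the inequality on $W_0$-isotypic components, of which there are only a handful. On each such component $Q_\mu$ becomes a small explicit Hermitian matrix whose positive semidefiniteness can be verified by diagonalization or by a Cauchy--Schwarz estimate relating cross terms to the diagonal $\operatorname{Ric}^\perp$ part. The same scheme, applied to (1.2) with the structure of the $^d$CQB expression (note the opposite sign pattern compared with (1.1)), yields positivity of $^d$CQB via the analogous $\operatorname{Ric}^+$ estimate from the first named author's earlier work. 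As a sanity check, restricting the general computation to rank-one $A$ must reproduce the already-known positivity of $\operatorname{Ric}^\perp$ and $\operatorname{Ric}^+$ on this space.
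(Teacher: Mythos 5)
Your outline follows the same route as the paper's argument: identify $T'_oM$ with the twelve root spaces $\alpha_{kl}$ ($k,l$ in distinct blocks), compute the K\"ahler--Einstein constants $g_{kl}$, use the root-graded vanishing $R_{\alpha\bar\beta\gamma\bar\delta}=0$ unless $\alpha+\gamma=\beta+\delta$, and then show that the quadratic form $\langle Q(A),\overline A\rangle$ is dominated in absolute value by $\mu\|A\|^2$. But as written the proposal stops exactly where the theorem is actually decided. You never compute the metric constants (here $g_{kl}=2$ for the six ``adjacent-block'' roots and $g_{kl}=4$ for $\alpha_{15},\alpha_{16},\alpha_{25},\alpha_{26}$), never enumerate which cross components $R_{\alpha\bar\beta\gamma\bar\delta}$ with $\alpha<\beta\le\delta<\gamma$ are actually nonzero, and never verify the domination. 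These are not routine details: for the full flag $SU(r+1)/\mathbb T$ the identical strategy yields only $\mathrm{CQB}\ge 0$ (with equality on matrices supported on the simple roots), so strict positivity for $SU(6)/S(U(2)^3)$ hinges on the specific numerical coefficients --- that the ``$Y$-type'' cross terms carry square-root factors $\le\tfrac12$ and that the ``$X$-type'' terms occur only for the four quadruples $(1,3,4,5),(1,3,4,6),(2,3,4,5),(2,3,4,6)$ with $g_{ik}=4$ in the denominator. Without producing those bounds, the claim that ``the off-diagonal pieces must be dominated by the leading part'' is an assertion, not a proof.

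Two further points. First, your stated input that $\operatorname{Ric}^\perp>0$ ``holds on any K\"ahler--Einstein C-space'' is false: by the paper's own reference [NWZ], no $\mathbb P^1$-bundle admits a metric with $\operatorname{Ric}^\perp>0$, and $SU(3)/\mathbb T$ with its K\"ahler--Einstein metric is such a C-space. In any case the paper's mechanism does not route through $\operatorname{Ric}^\perp$; since $g$ is Einstein one has $\mathrm{CQB}(A)=\mu\|A\|^2-\langle Q(A),\overline A\rangle$ and $\mbox{}^d\mathrm{CQB}(A)=\mu\|A\|^2+\langle Q(A),\overline A\rangle$, and the single estimate $|\langle Q(A),\overline A\rangle|<\mu\|A\|^2$ handles both signs at once; that is the cleaner formulation you should adopt. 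Second, the symmetry reduction is misstated: averaging $A$ over $W_0$ projects onto the trivial isotypic component and loses information; what you mean is that a $W_0$-invariant Hermitian form is block-diagonal with respect to the isotypic decomposition, so it suffices to test each isotypic block. Even with that fix you would still have to compute the form on each block, which again amounts to the explicit coefficient bookkeeping you have deferred.
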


We should point out that understanding the curvature behavior of K\"ahler C-spaces is a nontrivial matter, despite the fact that such spaces are classical objects of study since 1950s and are fully classified from the Lie algebraic point of view. As an illustrating example, recall the following general belief:

\begin{conjecture}
Any K\"ahler C-space has positive holomorphic sectional curvature $H$.
\end{conjecture}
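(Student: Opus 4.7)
The plan is to reduce the conjecture to an explicit root-theoretic positivity statement. Realize the K\"ahler C-space as $M=G/K$, where $G$ is compact semisimple and $K$ is the centralizer of a torus. Fix a maximal torus $\mathbb T\subset K$ and a system of positive roots; the complementary positive roots $R_M^+$ (those not in $\mathfrak k$) index a basis $\{E_\alpha\}_{\alpha\in R_M^+}$ of $\mathfrak m^{1,0}$ at the origin. Every invariant K\"ahler metric takes the form $g=\sum_{\alpha\in R_M^+} c_\alpha\, E_\alpha^*\otimes\overline{E_\alpha^*}$, and for the K\"ahler-Einstein metric the coefficients are the Koszul numbers $c_\alpha=\langle \delta_M,\alpha\rangle$ with $\delta_M=\tfrac12\sum_{\beta\in R_M^+}\beta$.

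For a $(1,0)$-vector $X=\sum x_\alpha E_\alpha$, the standard invariant-metric curvature formula yields
\[
H(X)=R(X,\overline X,X,\overline X)=\sum_{\alpha,\beta,\gamma,\delta\in R_M^+} A_{\alpha\beta\gamma\delta}\, x_\alpha\bar x_\beta x_\gamma\bar x_\delta,
\]
a Hermitian quartic whose coefficients are explicit quadratic expressions in the structure constants $N_{\mu,\nu}$ of $\mathfrak g^{\mathbb C}$ weighted by the $c_\alpha$. I would first handle the \emph{diagonal case} $X=E_\alpha$, which reduces to an inequality between $c_\alpha^2\,\alpha(H_\alpha)$ and weighted sums of $|N_{\alpha,\pm\beta}|^2$ terms; this should follow from the strict positivity of $\delta_M$ on $R_M^+$ together with the Jacobi identity. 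By $K$-equivariance the isotropy group acts transitively on the unit sphere of each irreducible summand $\mathfrak m_j$, so the diagonal case already settles positivity whenever $X$ lies in a single summand. The decisive next step is to handle mixed vectors by decomposing $\mathfrak m$ under the grading induced by the center of $\mathfrak k$ and recasting $H(X)$ as a block-diagonal Hermitian form on correlated pairs and triples of roots, followed by a sum-of-squares type decomposition on each block.

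The main obstacle, and the reason the conjecture remains open, is exactly the control of these cross terms. In the Hermitian symmetric case $[\mathfrak m^{1,0},\mathfrak m^{1,0}]=0$, and $H$ reduces to a manifestly positive diagonal sum; in the non-symmetric case, brackets $[E_\alpha,E_\beta]$ with $\alpha+\beta\in R_M^+$ land back in $\mathfrak m^{1,0}$ and contribute potentially negative pieces to $H$ that must be dominated by the diagonal contributions, and whether the Koszul weights suffice in full generality is not evident. A uniform proof would plausibly proceed via a Cauchy-Schwarz-type inequality applied to the Jacobi relation satisfied by the $N_{\mu,\nu}$, exploiting the positivity of $\delta_M$; failing that, the finite classification by painted Dynkin diagrams at least opens the door to a case-by-case verification along the lines of the computations carried out for Theorems \ref{thm:ex1} and \ref{thm:ex2} above.
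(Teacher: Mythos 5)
This statement is a \emph{conjecture} in the paper; the authors explicitly describe it as ``still widely open'' and offer no proof, only a survey of partial results (Itoh's verification for the classical $b_2=1$ cases and Lohove's thesis covering classical type of rank at most $4$). So there is no proof in the paper to compare against, and your proposal does not supply one either: you yourself identify the decisive step --- dominating the off-diagonal contributions coming from brackets $[E_\alpha,E_\beta]$ with $\alpha+\beta\in R_M^+$ by the diagonal terms --- and then state that ``whether the Koszul weights suffice in full generality is not evident.'' That is precisely the open problem, not a reduction of it. A plan that ends at the known obstruction is a correct diagnosis of why the conjecture is hard, but it is not a proof.

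There is also a concrete error in an intermediate step. You claim that by $K$-equivariance the isotropy group acts transitively on the unit sphere of each irreducible summand $\mathfrak m_j$, so that positivity of $H(E_\alpha)$ for root vectors settles the case of $X$ lying in a single summand. The isotropy representation on each $\mathfrak m_j$ is irreducible, but irreducibility does not give transitivity on the unit sphere (compact groups acting transitively on spheres are very special), and for generalized flag manifolds the summands $\mathfrak m_j$ are typically spanned by several root spaces on which the semisimple part of $K$ acts with nontrivial weight multiplicities. So even the ``single summand'' case is not reduced to the diagonal computation $H(E_\alpha)=g_\alpha z_\alpha^2\,B(H_\alpha,H_\alpha)>0$; it already involves the same kind of cross terms you defer. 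The fallback you mention --- case-by-case verification via painted Dynkin diagrams in the spirit of the type~$A$ computations of Theorems \ref{thm:ex1} and \ref{thm:ex2} --- is indeed how the known partial results were obtained, but carrying it out in general is exactly what remains to be done.
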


 This question is still widely open. For K\"ahler C-spaces with $b_2=1$, all the classical types plus a few exceptional ones are known to have $H>0$ by the work of Itoh \cite{Itoh}. In a recent PhD thesis \cite{Lohove}, Simon Lohove underwent a highly sophisticated approach and he was able to show  that all irreducible K\"ahler C-spaces of classical type with rank less than or equal to $4$  have $H>0$. Note that the rank here means that of the group, so all such spaces have $b_2 \leq 4$ in particular. Through isometric embedding, he also reduced the question largely to the case of flag manifolds with K\"ahler-Einstein metrics.

In the more challenging opposite direction, we propose the following:

\begin{conjecture}
Let $(M,g)$ be a K\"ahler-Einstein manifold with CQB $\geq 0$ and $^d\!$CQB $>0$. Then $M$ is biholomorphic to a K\"ahler C-space.
\end{conjecture}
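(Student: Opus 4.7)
The plan is to proceed in three stages: first extract all consequences of $^d$CQB $>0$ and the K\"ahler-Einstein hypothesis from the paper; then exploit CQB $\ge 0$ to force $\mbox{Aut}^0(M)$ to act transitively on $M$; and finally invoke the classification of compact simply-connected homogeneous K\"ahler-Einstein manifolds to conclude that $M$ is biholomorphic to a K\"ahler C-space. For the first stage, Theorem~1.1 applied to $^d$CQB$_1>0$ (which is implied by $^d$CQB $>0$) yields $\Ric>0$, so $M$ is Fano, projective, and simply connected. The hypothesis $^d$CQB $>0$ also gives $H^1(M,T'M)=0$ by the result cited from \cite{N}, hence $(M,g)$ is infinitesimally deformation rigid; and Matsushima's theorem, applied to the K\"ahler-Einstein metric with positive scalar curvature, gives that $\mbox{Aut}^0(M)$ is reductive.

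The second, and principal, stage is to turn the inequality CQB $\ge 0$ into a source of infinitesimal symmetries. The idea is that the Bochner-type identity used in \cite{N} to derive $\Ric^{\perp}>0$ from CQB $>0$ expresses CQB$(A)$ as the sum of a first-order quadratic expression in $A$ and a curvature coupling; running the argument with CQB $\ge 0$ in place of strict positivity, and using that the K\"ahler-Einstein condition makes the Ricci form parallel, one expects the equality case CQB $=0$ to force parallelism of the extremal $A$. The Kodaira-Nakano-type vanishing supplied by $^d$CQB $>0$ should then let one lift such parallel objects to global holomorphic vector fields, and showing that the resulting infinitesimal automorphisms span $T_xM$ at every point would give transitivity of $\mbox{Aut}^0(M)$. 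A supplementary input would almost certainly be a K\"ahler-Ricci-flow argument in the spirit of Theorem~\ref{thm:fano}: if CQB $\ge 0$ is preserved along the flow, then since $g$ is stationary as a K\"ahler-Einstein metric, a rigid algebraic identity must already hold at $g$, which could be exploited to extract the required parallel tensors.

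Once $M$ is homogeneous and simply connected, writing $M=G/H$ with $G$ compact, the K\"ahler-Einstein hypothesis together with $\Ric>0$ and the structure theorem of Wang-Matsushima forces $H$ to be the centralizer of a torus in a compact semisimple $G$, which is precisely the definition of a K\"ahler C-space. The dominant obstacle is the middle step: the pointwise inequality CQB $\ge 0$ does not a priori produce any global holomorphic vector field, and a genuinely new curvature identity or flow-invariance statement specific to the K\"ahler-Einstein setting appears necessary to bridge this gap. Without such an additional ingredient, the full conjecture seems out of reach, and one might at best hope to first establish the weaker conclusion that $M$ is locally Hermitian symmetric, which would recover the symmetric case via Mok's theorem \cite{Mok} and reduce the problem to identifying the non-symmetric C-spaces through their Lie-algebraic rigidity.
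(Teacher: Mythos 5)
This statement is posed in the paper as an open conjecture, not as a theorem: the authors give no proof and only remark that the simply-connectedness, projectivity and rigidity results of \cite{N} together with Theorem \ref{thm:fano} are ``positive evidences'' towards it. So there is no proof of the paper's to compare yours against, and your proposal has to stand on its own as an attempted resolution of an open problem. It does not. Your first stage is sound ($\mbox{}^d$CQB$_1>0$ gives $\Ric>0$, hence Fano, projective and simply connected; $H^1(M,T'M)=0$; Matsushima reductivity), and your third stage is standard (a compact simply-connected homogeneous Fano manifold is a K\"ahler C-space). But the entire content of the conjecture sits in your second stage, and there you have only a wish list. The vanishing $H^1(M,T'M)=0$ concerns deformations, not symmetries: it says nothing about $H^0(M,T'M)$, and positivity of the Ricci curvature never by itself forces $\mathrm{Aut}^0(M)$ to be nontrivial, let alone transitive (most Fano K\"ahler--Einstein hypersurfaces have finite automorphism group). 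No Bochner-type identity is known that converts the pointwise inequality CQB $\ge 0$ into global holomorphic vector fields, and the flow argument of Theorem \ref{thm:fano} is stationary at a K\"ahler--Einstein metric and yields no new tensorial identity there. A proposal whose central step is the admission that ``a genuinely new curvature identity or flow-invariance statement appears necessary'' is a research programme, not a proof.

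One further caution about your fallback suggestion: proving under these hypotheses that $M$ is locally Hermitian symmetric is not a viable intermediate goal, because it would be false. Theorems \ref{thm:ex1} and \ref{thm:ex2} of this very paper exhibit non-symmetric irreducible K\"ahler C-spaces --- the type $A$ flag manifolds $SU(r+1)/{\mathbb T}$ and $SU(6)/S(U(2)\times U(2)\times U(2))$ --- whose K\"ahler--Einstein metrics satisfy CQB $\ge 0$ and $\mbox{}^d$CQB $>0$; any argument forcing local symmetry from these hypotheses would contradict those examples. The conjecture remains open; your write-up correctly identifies, but does not bridge, the essential gap.
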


This conjecture, if affirmed, would be the  first curvature characterization of compact homogeneous K\"ahler manifolds, which has been long missing but hoped for, in the direction of generalized Hartshorne conjecture. Of course an even bolder speculation would be to drop the K\"ahler-Einstein assumption in the above conjecture. The simply-connectedness, projectivity, and deformation rigidity result proved recently in \cite{N}, and Theorem \ref{thm:fano} above are positive evidences towards this conjecture. Theorem \ref{thm:fano} and Corollaries \ref{coro:1} and \ref{coro:2} also serve an initial step towards the classification conjecture as the main result of \cite{HSW} towards the classification of K\"ahler manifolds with nonnegative bisectional curvature. The examples in Theorems \ref{thm:ex1} and \ref{thm:ex2} indicate that the situation here is more delicate.

Note that most results mentioned above, except the construction of examples, hold for the non-positive cases by flipping the sign of the curvature. These results are summarized in the last section. In the last section we also show that the two dimensional Mostow-Siu example \cite{MS} had CQB$<0$ and $^d$CQB$<0$. This is a non-Hermitian symmetric example to which Theorem 4.1 of \cite{N} can be applied, hence locally deformation rigid (it is in fact strongly rigid in the sense of Siu as well). This naturally leads to the question of the role played by CQB and $^d$CQB  in the strong rigidity and holomorphicity of harmonic maps. We leave this to a future study.

\section{Cross quadratic bisectional curvature and its dual}

It is proved in \cite{N} that positive CQB$_1$ implies that the orthogonal Ricci curvature $\mbox{Ric}^{\perp}$ is positive, and CQB$_2>0$ implies that the Ricci curvature $\mbox{Ric}$ is $2$-positive, namely, the sum of any two of its eigenvalues is positive. We first show that the Ricci curvature is also positive under the CQB$_1>0$ assumption:

\begin{theorem}\label{thm:21}
Let $(M^n,g)$ ($n\geq 2$) be a K\"ahler manifold with positive (or nonnegative) $\mbox{CQB}_1$, then its Ricci curvature is also positive (or nonnegative). Moreover $Ric(X,\overline{X})\ge \frac{1}{n-1}Ric^\perp(X, \overline{X})$.
\end{theorem}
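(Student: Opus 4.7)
The hypothesis $\text{CQB}_1 \geq 0$ only constrains CQB on rank-one maps, so my plan is first to unpack $\text{CQB}(A)$ explicitly on such an $A$. Parametrizing a rank-one $A: T''M \to T'M$ by its entries $A^\gamma_\alpha = w^\gamma v_\alpha$, equivalently $A(\overline{E}_\alpha) = v_\alpha W$ with $W = w^\gamma E_\gamma$ and $V = v_\alpha E_\alpha$ in $T'M$, I would substitute into the defining formula (\ref{eq:cqb1}). The first sum contracts to $|V|^2 \Ric(W, \overline{W})$, since $\sum_\beta R(\cdot, \cdot, E_\beta, \overline{E}_\beta) = \Ric$ and $\sum_\alpha |v_\alpha|^2 = |V|^2$, while the second sum collapses to $R(W, \overline{W}, V, \overline{V})$ after regrouping the $v$ and $w$ factors and using the exchange symmetry of $R$. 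Thus $\text{CQB}_1 \geq 0$ translates to the pointwise inequality
$$
|V|^2 \Ric(W, \overline{W}) \;\geq\; R(W, \overline{W}, V, \overline{V}) \qquad \text{for all } V, W \in T'_xM,
$$
with strict inequality on nonzero $V, W$ when $\text{CQB}_1 > 0$.

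Next I would fix a unit vector $W$ and extend it to a unitary frame $\{E_1 = W, E_2, \ldots, E_n\}$ of $T'_xM$. Applying the inequality just derived with $V = E_j$ for $j = 2, \ldots, n$ and summing, the left side becomes $(n-1)\Ric(W,\overline{W})$, while the right side becomes $\sum_{j=2}^n R(W, \overline{W}, E_j, \overline{E}_j) = \Ric(W,\overline{W}) - H(W) = \Ric^\perp(W, \overline{W})$. This gives the claimed estimate
$$
\Ric(W,\overline{W}) \;\geq\; \frac{1}{n-1}\,\Ric^\perp(W, \overline{W}),
$$
initially for unit $W$ and then for all $W$ by homogeneity (both sides are quadratic in $W$).

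To conclude positivity of $\Ric$, I would combine this inequality with the result of \cite{N} that $\text{CQB}_1 > 0$ forces $\Ric^\perp > 0$, which immediately yields $\Ric > 0$ in the strict case. For the nonnegative case, $\Ric \geq 0$ follows by continuity, or more self-containedly by summing the pointwise inequality over $V = E_1, \ldots, E_n$ to get $n\,\Ric(W,\overline{W}) \geq \Ric(W,\overline{W})$, i.e. $(n-1)\Ric(W,\overline{W}) \geq 0$, which gives $\Ric \geq 0$ since $n \geq 2$. I do not anticipate a genuine obstacle: the whole argument is a contraction of the rank-one data followed by summation over a suitable subset of a unitary frame. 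The only modestly delicate choice is to restrict $V$ to the orthogonal complement of $W$ rather than summing over the full frame, which is precisely what produces the sharp constant $\tfrac{1}{n-1}$ rather than the weaker $\tfrac{1}{n}$.
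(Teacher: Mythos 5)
Your proposal is correct and follows essentially the same route as the paper: evaluate CQB on rank-one maps to obtain the pointwise inequality $|V|^2\,\Ric(W,\overline{W})\ge R(W,\overline{W},V,\overline{V})$, then sum over a unitary frame of the orthogonal complement of $W$ to get the $\frac{1}{n-1}$ bound, and invoke the positivity (or nonnegativity) of $\Ric^\perp$ from \cite{N}. The only caveat is that your aside ``by continuity'' for the nonnegative case is not justified as stated, but the self-contained alternative you give (summing over the full frame to obtain $(n-1)\Ric(W,\overline{W})\ge 0$) is correct and suffices.
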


\begin{proof}
First we claim that, under the assumption that CQB$_1$ is positive, then for any unit vectors $X$, $Y$ in $T'M$ such that $X\perp Y$, we must have $\mbox{Ric}(X, \overline{X}) > R(X, \overline{X}, Y, \overline{Y})$. To see this, let $E$ be a unitary frame for $T'M$ with $X=E_1$ and $Y=E_2$, and let $A$ be the map such that $A(\overline{E}_2)=E_1$ and $A(\overline{E}_i)=0$ for any $i\neq 2$. Applying $(2,1)$ we get
$ \mbox{Ric}_{1\overline{1}} > R_{2\overline{2}1\overline{1}}$,
so the claim is proved. By the same token, $ \mbox{Ric}_{1\overline{1}} > R_{i\overline{i}1\overline{1}}$ for any $i>1$. Add up these inequalities for $i$ from $2$ to $n$,  we get
$ (n-1)\mbox{Ric}_{1\overline{1}} > \mbox{Ric}^{\perp}_{1\overline{1}}$,
so the Ricci curvature is positive since the orthogonal Ricci is known to be positive by \cite{N}. The nonnegative case goes similarly.
\end{proof}

\begin{corollary}
Let $M^n = M_1 \times M_2$ be a product K\"ahler manifold. Then $M$ has positive (or nonnegative) CQB$_k$ if and only if both $M_1$ and $M_2$ have positive (or nonnegative) CQB$_k$ for any $1\le k\le n$.
\end{corollary}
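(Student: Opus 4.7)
The plan is to exploit the product splitting $T'M = T'M_1 \oplus T'M_2$ by choosing a unitary frame $\{E_\alpha\}_{\alpha=1}^{n}$ adapted to it: let $n_i = \dim_{\mathbb{C}} M_i$, $I_1 = \{1,\ldots,n_1\}$, $I_2 = \{n_1+1,\ldots,n\}$, with $E_\alpha \in T'M_1$ for $\alpha \in I_1$ and $E_\alpha \in T'M_2$ for $\alpha \in I_2$. For the product metric, the curvature components $R(U,\overline{V},X,\overline{Y})$ vanish whenever the four vectors $U,V,X,Y$ do not all lie in the same factor; this single observation is what drives the entire computation.

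Any linear map $A: T''M \to T'M$ decomposes into four blocks $B_1: T''M_1 \to T'M_1$, $B_2: T''M_2 \to T'M_1$, $B_3: T''M_1 \to T'M_2$, $B_4: T''M_2 \to T'M_2$ via the natural inclusions and projections. Expanding (\ref{eq:cqb1}) in the adapted frame and discarding every vanishing curvature component, I expect the key identity
\[
\mbox{CQB}_M(A) \;=\; \mbox{CQB}_{M_1}(B_1) + \mbox{CQB}_{M_2}(B_4) + \sum_{\alpha \in I_2} \mbox{Ric}_1\!\left(B_2(\overline{E}_\alpha),\,\overline{B_2(\overline{E}_\alpha)}\right) + \sum_{\alpha \in I_1} \mbox{Ric}_2\!\left(B_3(\overline{E}_\alpha),\,\overline{B_3(\overline{E}_\alpha)}\right).
\]
In the subtracted term $-R(E_\alpha,\overline{E}_\beta,A(\overline{E}_\alpha),\overline{A(\overline{E}_\beta)})$ of (\ref{eq:cqb1}), only indices $\alpha,\beta$ in the same factor contribute, and the off-diagonal blocks $B_2,B_3$ are then killed since $A(\overline{E}_\alpha),A(\overline{E}_\beta)$ must also lie in the factor of $E_\alpha,E_\beta$ for the curvature to survive; so this term contributes only the two diagonal CQB pieces. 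In the first term of (\ref{eq:cqb1}), summing $\beta$ over one factor's indices produces that factor's Ricci tensor, which is how the off-diagonal blocks enter.

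With the identity in hand, both directions of the corollary are essentially immediate. For the ``only if'' direction, given $B: T''M_i \to T'M_i$ of rank $\leq k$, lift it to an $A$ on $M$ whose only nonzero block is $B$ itself; the identity collapses to $\mbox{CQB}_M(A) = \mbox{CQB}_{M_i}(B)$, and the CQB$_k$ positivity or nonnegativity descends to each factor. For the ``if'' direction, any $A$ of rank $\leq k$ on $M$ produces blocks $B_j$ of rank $\leq k$, so the two diagonal CQB terms are $\geq 0$ by hypothesis (strict whenever the block is nonzero), while by Theorem \ref{thm:21} applied to each factor, $\mbox{Ric}_1$ and $\mbox{Ric}_2$ are nonnegative (strictly positive in the strict case), making the two Ricci terms nonnegative as well. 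Since $A \neq 0$ forces at least one block to be nonzero, we get $\mbox{CQB}_M(A) > 0$ in the strict case and $\geq 0$ otherwise. The main bookkeeping obstacle is a careful accounting for why the subtracted term in (\ref{eq:cqb1}) does not couple the diagonal and off-diagonal blocks; this is the one nonroutine step, and it rests entirely on the product structure of $R$.
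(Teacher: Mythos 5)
Your proposal is correct and takes essentially the same route as the paper: the same product-adapted unitary frame, the same four-block decomposition of $A$, the same key identity expressing $\mathrm{CQB}_M(A)$ as the factor CQB's of the two diagonal blocks plus the factor Ricci forms applied to the two off-diagonal blocks, and the same appeal to Theorem~\ref{thm:21} to control the Ricci cross terms in both directions.
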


\begin{proof}Since CQB is independent of the choice of the unitary frames $E$ we take the unitary frame $E$ to be compatible with the product structure:
$$ E = \{ E_1, \ldots , E_r; E_{r+1}, \ldots , E_n\},$$
where  $r$ is the dimension of $M_1$ and the first $r$ elements give a frame for $M_1$. We will use the index convention that $i$, $j, \ldots $ run from $1$ and $r$, while $\alpha$, $\beta , \ldots $ run from $r+1$ and $n$. Denote by $R'$, $R''$ the curvature tensor of $M_1$, $M_2$, respectively, and write
$$ A(\overline{E}_i) = A'(\overline{E}_i) + B(\overline{E}_i), \ \ A(\overline{E}_{\alpha}) = C(\overline{E}_{\alpha}) + A''(\overline{E}_{\alpha})$$
for the decomposition into $T'M = T'M_1 \times T'M_2$, then by definition, we have
\begin{eqnarray*}
\mbox{CQB}^{M} (A) & = & \sum_{a,b,c=1}^n Ric_{a\overline{b}}A_{ca}\overline{A_{cb}} \ - \sum_{a,b,c,d=1}^n R_{a\overline{b}c\overline{d}} A_{ac} \overline{A_{bd}} \\
& = & \sum_{i,j,c}  Ric_{i\overline{j}}A_{ci}\overline{A_{cj}} + \sum_{\alpha , \beta ,c}  Ric_{\alpha\overline{\beta}}A_{c\alpha }\overline{A_{c\beta}} \ - \sum_{a,b,c,d=1}^n R_{a\overline{b}c\overline{d}} A_{ac} \overline{A_{bd}} \\
& = & \mbox{CQB}^{M_1}(A') + \mbox{CQB}^{M_2}(A'') + \sum_{i,j,\alpha} Ric_{i\overline{j}} A_{\alpha i} \overline{A_{\alpha j}} + \sum_{\alpha , \beta , i} Ric_{\alpha\overline{\beta}} A_{i\alpha } \overline{A_{i\beta }},
\end{eqnarray*}
so the conclusion follows. Note that the positivity of $CQB_k$ implies that the dimension of the manifold must be at least $2$.
\end{proof}

Since every irreducible compact Hermitian symmetric space with dimension bigger than one has positive CQB and $^d$CQB by \cite{N}, the above corollary allows us to conclude that

\begin{corollary}
Every compact Hermitian symmetric has  positive $^d\!$CQB and nonnegative CQB, and it has positive CQB if and only if it does not have any ${\mathbb P}^1$ factor.
\end{corollary}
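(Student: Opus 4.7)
The plan is to reduce the statement to a factor-by-factor analysis via the de Rham decomposition and then handle the single new case, $\mathbb{P}^1$, by a direct computation from the defining formulas.

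First, I would invoke the standard fact that any compact Hermitian symmetric space $M$ splits holomorphically and isometrically as a product $M=M_1\times\cdots\times M_k$ of irreducible compact Hermitian symmetric spaces (no flat factor arises, since we are in compact type), each $M_i$ being either $\mathbb{P}^1$ or of complex dimension at least $2$. I would then apply the preceding Corollary together with its $\mbox{}^d$CQB analogue (which follows from exactly the same splitting of the defining sums, with $\mbox{}^d$CQB replacing CQB in the proof) to conclude that positivity (resp.\ nonnegativity) of CQB or of $\mbox{}^d$CQB on $M$ is equivalent to the same property holding simultaneously on each $M_i$. This reduces the whole problem to checking the two curvature conditions on each irreducible factor.

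Next, for every irreducible factor of complex dimension at least $2$, I would quote \cite{N}, where both $\mbox{CQB}>0$ and $\mbox{}^d\mbox{CQB}>0$ have already been established, leaving only the case $M_i\cong\mathbb{P}^1$. Here $n=1$, and any $A\colon T''M\to T'M$ is determined by a single scalar $c$ via $A(\overline{E}_1)=cE_1$; substituting into \eqref{eq:cqb1} gives
$$
\mbox{CQB}(A)=|c|^2 R_{1\bar{1}1\bar{1}}-|c|^2 R_{1\bar{1}1\bar{1}}=0,
$$
while the analogous substitution in \eqref{eq:dcqb1} yields $\mbox{}^d\mbox{CQB}(A)=2|c|^2 R_{1\bar{1}1\bar{1}}>0$, since $\mathbb{P}^1$ has strictly positive holomorphic sectional curvature.

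Combining the two steps, $\mbox{}^d$CQB is strictly positive on every factor and hence on $M$; CQB is nonnegative on every factor (identically zero on any $\mathbb{P}^1$ factor, strictly positive on the others) and hence on $M$; and CQB fails to be strictly positive precisely when some $M_i\cong\mathbb{P}^1$. The only delicate point is the one-dimensional computation, where CQB and $\mbox{}^d$CQB behave oppositely: the two curvature terms cancel in the CQB formula but reinforce in the $\mbox{}^d$CQB formula. No further obstacle is anticipated, as everything else reduces to already established results in \cite{N} and to the product corollary.
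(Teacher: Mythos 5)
Your proof is correct and follows essentially the same route as the paper: decompose into irreducible factors, invoke the product corollary (and its $\mbox{}^d$CQB analogue) together with the result of \cite{N} for irreducible factors of dimension at least two, and dispose of the $\mathbb{P}^1$ case separately. The only difference is that you write out the one-dimensional computation (CQB $\equiv 0$, $\mbox{}^d$CQB $>0$ on $\mathbb{P}^1$) explicitly, which the paper leaves implicit.
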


If $(M^n,g)$ is a compact K\"ahler manifold with nonnegative Ricci curvature, then by the  work of Campana, Demailly and Peternell \cite{CDP}, the universal cover $\widetilde{M}$ of $M$ is holomorphically and isometrically the product ${\mathbb C}^k \times M_1 \times M_2$, where the first factor (if $k>0$) is the flat de Rham factor, and $M_1$ is Calabi-Yau (simply connected with trivial canonical line bundle), while $M_2$ is rationally connected. Also, there exists a finite cover $M'$ of $M$, such that the Albanese map $\pi : M' \rightarrow \mbox{Alb}(M')$ is surjective and is a holomorphic and metric fiber bundle with fiber $M_1\times M_2$. Here the bundle being metric means that any point in the base is contained in a neighborhood over which the bundle is isometric to the product of the fiber with the base neighborhood.

Now if $(M^n,g)$ is a compact K\"ahler manifold with CQB$_1\ge 0$, then since it has nonnegative Ricci, the above structure theorem applies. We claim that the Calabi-Yau factor cannot occur in this case:

\begin{theorem}\label{thm:24}
Let $(M^n,g)$ be a compact K\"ahler manifold with CQB$_1\ge 0$. Then a finite cover $M'$ of $M$ is a holomorphic and metric fiber bundle over its Albanese torus, with fiber being a rationally connected manifold. In particular, if $M$ has no flat de Rham factor, then it is rationally connected.
\end{theorem}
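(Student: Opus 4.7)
The plan is to apply the Campana-Demailly-Peternell structure theorem and then rule out the Calabi-Yau factor using the compatibility of CQB$_1$ with Riemannian products.

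First I would invoke Theorem \ref{thm:21} to upgrade the hypothesis to $\mathrm{Ric}(g) \ge 0$, and then apply the structure result from \cite{CDP} quoted in the discussion preceding the statement. This yields a finite cover $M' \to M$ which fibers holomorphically and metrically over $\mathrm{Alb}(M')$ with fiber $M_1 \times M_2$, where $M_1$ is a compact simply-connected Calabi-Yau factor (holonomy contained in $\mathrm{SU}$, and in particular with no flat de Rham summand) and $M_2$ is rationally connected. The theorem therefore reduces to the assertion $\dim_{\mathbb C} M_1 = 0$.

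To establish this I would pass to the universal cover $\widetilde{M} = \mathbb C^k \times M_1 \times M_2$. Since CQB$_1 \ge 0$ is a pointwise condition it lifts to $\widetilde{M}$, and the non-strict version of Corollary \ref{coro:1}, applied iteratively to this product decomposition, gives CQB$_1 \ge 0$ on the factor $M_1$ itself. On $M_1$ the Ricci tensor vanishes; by the non-strict form of the implication CQB$_1 > 0 \Rightarrow \mathrm{Ric}^{\perp} > 0$ from \cite{N} we also have $\mathrm{Ric}^{\perp} \ge 0$ on $M_1$, so the inequality $\mathrm{Ric}(X,\overline X) \ge \frac{1}{n-1}\,\mathrm{Ric}^{\perp}(X,\overline X)$ of Theorem \ref{thm:21} forces $\mathrm{Ric}^{\perp} \le 0$ and hence $\mathrm{Ric}^{\perp} \equiv 0$. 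Combined with $\mathrm{Ric} \equiv 0$, the identity $\mathrm{Ric}^{\perp}(X,\overline X) = \mathrm{Ric}(X,\overline X) - H(X)$ yields $H \equiv 0$ on $M_1$; since on a K\"ahler manifold the Riemann curvature tensor is determined pointwise by its holomorphic sectional curvature (standard polarization), this forces $R \equiv 0$ on $M_1$. Thus $M_1$ is flat, and because it was set up to have no flat de Rham summand, $\dim_{\mathbb C} M_1 = 0$.

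The ``in particular'' statement is then immediate: absence of a flat de Rham factor in $M$ forces $k = 0$, so $\mathrm{Alb}(M')$ is a point, $M' = M_2$ is rationally connected, and hence so is $M$ (rational connectedness is preserved under finite \'etale quotients of smooth projective varieties). The one technical point I anticipate as the main obstacle is verifying that CQB$_1 \ge 0$ descends to each factor of the universal-cover decomposition; this follows from Corollary \ref{coro:1} together with the pointwise nature of CQB$_1$, and so is essentially routine, but it is the step where one has to be careful since the CDP splitting is only global after passing to $\widetilde M$.
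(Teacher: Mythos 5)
Your proposal is correct and follows essentially the same route as the paper: apply the Campana--Demailly--Peternell decomposition and then kill the Calabi--Yau factor by showing that Ricci-flatness together with CQB$_1\ge 0$ (via $\mathrm{Ric}^\perp\ge 0$ and $(n-1)\mathrm{Ric}\ge\mathrm{Ric}^\perp$) forces $H\equiv 0$, hence flatness, contradicting compactness plus simple-connectedness. The only cosmetic difference is that the paper derives Ricci-flatness of the Calabi--Yau factor from $c_1=0$ and $\mathrm{Ric}\ge 0$ via the integral formula $c_1\cdot[\omega]^{n-1}=\frac1n\int_{M_1}S\,\omega^n$, whereas you read it off from the holonomy characterization of that factor; both are valid.
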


\begin{proof}
The goal is to rule out the Calabi-Yau factor, namely, to show that if $M_1$ is a simply-connected compact complex manifold with $c_1=0$, then it cannot admit any K\"ahler metric with CQB$_1\ge 0$. To see this, notice that we have shown that $(n-1)\mbox{Ric} \geq \mbox{Ric}^{\perp} \geq 0$. So if $\mbox{Ric} (X, \overline{X})=0$ for $X\in T'M_1$, then $\mbox{Ric}^{\perp} (X, \overline{X})=0$ and $R(X, \overline{X}, X, \overline{X})=0$. Let $\eta$ be the Ricci $(1,1)$-form of $M_1$, then by
$$ c_1\cdot [\omega ]^{n-1} = \int_{M_1} \eta \wedge \omega^{n-1} = \frac{1}{n} \int_{M_1} S \omega^n ,$$
where $\omega$ is the K\"ahler form and $S$ the scalar curvature, we see that the vanishing of the first Chern class $c_1$ plus the nonnegativity of Ricci imply that $M_1$ has to be scalar flat hence Ricci flat. So the holomorphic sectional curvature is identically zero, contradicting the fact that $M_1$ is simply connected.
\end{proof}

In fact for any pair of $X$ and $Y$ by choosing $\{E_i\}$ such that $E_1=\frac{X}{|X|}$, and letting $A$ be the map with $A(\overline{E}_1)=Y$, $A(\overline{E}_i)=0$ for $i\ge 2$ the argument above implies the following corollary.
\begin{corollary} The assumption CQB$_1\ge 0$ is equivalent to that for any $X$ and $Y$,
\begin{equation}\label{eq:bi}
|X|^2 Ric(Y, \overline{Y})-R(X, \overline{X}, Y, \overline{Y})\ge 0.
\end{equation}
If CQB$_1>0$, then the above holds as a strict inequality if $X, Y$ are nonzero.
\end{corollary}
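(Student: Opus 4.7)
The plan is to exploit the remark immediately preceding the statement: one direction will follow by evaluating CQB on a suitable rank-one test map, and the other by observing that every rank-one linear map has this standard form in an appropriate unitary frame.

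For the forward implication, I would assume $\mbox{CQB}_1\ge 0$ and fix nonzero $X,Y\in T'_x M$. I would choose a unitary frame $\{E_\alpha\}$ of $T'_x M$ with $E_1=X/|X|$, and define $A\colon T''_x M\to T'_x M$ by $A(\overline{E}_1)=Y$ and $A(\overline{E}_i)=0$ for $i\ge 2$. This $A$ has rank at most one, so $\mbox{CQB}(A)\ge 0$. Upon substituting $A$ into (\ref{eq:cqb1}), only $\alpha=1$ survives in the first sum and only $\alpha=\beta=1$ in the second, giving
\[
\mbox{CQB}(A)=\sum_{\beta=1}^n R(Y,\overline{Y},E_\beta,\overline{E}_\beta)-R(E_1,\overline{E}_1,Y,\overline{Y})=\Ric(Y,\overline{Y})-\tfrac{1}{|X|^2}R(X,\overline{X},Y,\overline{Y}).
\]
Multiplying by $|X|^2>0$ delivers (\ref{eq:bi}). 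The strict case is identical, since then $\mbox{CQB}(A)>0$ for the nonzero map $A$.

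For the converse, I would suppose (\ref{eq:bi}) holds for all $X,Y$ and let $A$ be any nonzero rank-one map $T''_x M\to T'_x M$. Since $\ker A$ has complex codimension one, there exist a unit vector $u\in T'_x M$ and a nonzero $Y\in T'_x M$ with $A(\overline{u})=Y$ and $A(\overline{Z})=0$ for all $Z\perp u$. Extending $u$ to a unitary frame with $E_1=u$ recovers the standard form above, and the same computation as before yields
\[
\mbox{CQB}(A)=\Ric(Y,\overline{Y})-R(u,\overline{u},Y,\overline{Y}),
\]
which is nonnegative by (\ref{eq:bi}) applied with $X=u$. This shows $\mbox{CQB}_1\ge 0$, and the strict version transfers in the same way.

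There is no serious obstacle here; the argument is essentially the calculation already sketched in the remark just above the statement. The only tiny point to verify is the elementary reduction of an arbitrary rank-one $\mathbb{C}$-linear map to the normal form $A(\overline{E}_1)=Y$, $A(\overline{E}_i)=0$ ($i\ge 2$) by a suitable choice of unitary frame, which is immediate from one-dimensionality of the kernel's orthogonal complement.
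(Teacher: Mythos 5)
Your proof is correct and is essentially the paper's argument: the authors obtain the corollary from exactly the same rank-one test map $A(\overline{E}_1)=Y$, $A(\overline{E}_i)=0$ in a frame with $E_1=X/|X|$, as stated in the remark preceding the corollary. Your explicit treatment of the converse (reducing an arbitrary rank-one map to this normal form) is the only part the paper leaves implicit, and it is handled correctly.
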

\noindent {\em Remark: }  It is not hard to see that under the CQB $\geq 0$ assumption, any tangent vector $X\in T'M$ with $\mbox{Ric}(X,\overline{X})=0$ must be in the kernel of the curvature tensor $R$, namely, $R( X, \overline{Y}, Z, \overline{W}) =0$ for any $Y$, $Z$, $W \in T'M$.

Next, let us recall the notion of {\em dual cross quadratic bisectional curvature} ($^d$CQB) introduced in \cite{N}. It is a Hermitian quadratic form on linear maps $A: T'M \rightarrow T''M $:
\begin{equation}
 ^d\mbox{CQB}(A) = \sum_{\alpha , \beta =1}^n  R(\overline{A(E_{\alpha})}, A(E_{\alpha} ), E_{\beta} , \overline{E}_{\beta} ) + R(E_{\alpha} , \overline{E}_{\beta}, \overline{  A(E_{\alpha} )} , A(E_{\beta} ) ) \end{equation}
where $R$ again is the curvature tensor of $M$ and  $\{ E_{\alpha } \}$ is a unitary frame of $T'M$. The manifold $(M^n,g)$ is said to have positive (or nonnegative) $^d$CQB, if at any point in $M$, for any unitary frame $E$ of $T'M$ at $p$, and for any non-trivial linear map $A: T'M \rightarrow T''M$, the value $^d\mbox{CQB}_E(A)$ is positive (or nonnegative). Related to this there is a $Ric^{+}(X, \overline{X})=Ric(X, \overline{X})+H(X)/|X|^2$.

It is proved in \cite{N} that compact K\"ahler manifold $M^n$ with positive $Ric^{+}>0$ is  projective and  simply connected. If $\mbox{}^d$CQB$>0$ it also  satisfies $H^1(M, T'M) =\{0\}$, so it is locally deformation rigid. Moreover $\mbox{}^d$CQB$_1>0$ implies $Ric^{+}>0$. Strictly analogous to the nonnegative CQB case, we have the following

\begin{theorem}
A K\"ahler manifold with positive (or nonnegative) $\mbox{}^d\!$CQB$_1>0$ will have positive (or nonnegative) Ricci. A compact K\"ahler manifold with nonnegative $\mbox{}^d\!$CQB$_1\ge 0$ and without flat de Rham factor is rationally connected. Moreover $Ric(X, \overline{X})\ge \frac{1}{n+1}Ric^{+}(X, \overline{X})$. In fact $^d\!$CQB$_1\ge 0$ is equivalent to the  estimate:
\begin{equation}\label{eq:bi2}
|X|^2 Ric(Y, \overline{Y})+R(X, \overline{X}, Y, \overline{Y})\ge 0
\end{equation}
for any pair of $(1,0)$-type vectors $X, Y$. If $^d\!$CQB$_1>0$, then the above holds as a strict inequality if $X, Y$ are nonzero.
\end{theorem}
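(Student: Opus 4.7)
The plan mirrors the proof of Theorem \ref{thm:21} with signs adjusted throughout. First I would verify the rank-one characterization of $\mbox{}^d\!$CQB$\mbox{}_1$. Given a nonzero vector $X\in T'M$ and any $Y\in T'M$, extend $X/|X|$ to a unitary frame $\{E_1=X/|X|,E_2,\dots,E_n\}$ and let $A:T'M\to T''M$ be the rank-one map with $A(E_1)=\overline{Y}$ and $A(E_i)=0$ for $i\ge 2$. Substituting $A$ into (\ref{eq:dcqb1}) collapses the double sum: the first group of terms contributes $\sum_\beta R(Y,\overline{Y},E_\beta,\overline{E}_\beta)=\Ric(Y,\overline{Y})$, while the second contributes $R(E_1,\overline{E}_1,Y,\overline{Y})=R(X,\overline{X},Y,\overline{Y})/|X|^2$. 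Thus $\mbox{}^d\!$CQB$\mbox{}_1\ge 0$ is equivalent to (\ref{eq:bi2}), with the strict version matching likewise. Because $R(X,\overline{X},Y,\overline{Y})=R(Y,\overline{Y},X,\overline{X})$, the inequality (\ref{eq:bi2}) is in fact symmetric in $X$ and $Y$, which supplies the companion estimate
\[
|Y|^2\,\Ric(X,\overline{X})+R(X,\overline{X},Y,\overline{Y})\ge 0.
\]

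For the Ricci bound, fix a unit vector $X$ and a unitary frame $\{E_1=X,E_2,\dots,E_n\}$. Applying the companion inequality with $Y=E_i$ for $i=2,\dots,n$ and summing gives
\[
(n-1)\,\Ric(X,\overline{X})\ge -\sum_{i\ne 1} R(X,\overline{X},E_i,\overline{E}_i)=-\bigl(\Ric(X,\overline{X})-H(X)\bigr),
\]
so that $n\,\Ric(X,\overline{X})\ge H(X)$. Adding $\Ric(X,\overline{X})$ to both sides yields $(n+1)\Ric(X,\overline{X})\ge \Ric(X,\overline{X})+H(X)=\Ric^{+}(X,\overline{X})$, which is the advertised estimate. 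Since $\mbox{}^d\!$CQB$\mbox{}_1>0$ already implies $\Ric^{+}>0$ by \cite{N}, positivity of $\Ric$ follows at once; the nonnegative case is identical.

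For the rational connectedness assertion, I would feed the Ricci nonnegativity into the Campana--Demailly--Peternell structure theorem \cite{CDP}, exactly as in the proof of Theorem \ref{thm:24}, and rule out a compact simply-connected Calabi--Yau factor $M_1$ in the universal cover. On such an $M_1$, the vanishing of $c_1$ together with $\Ric\ge 0$ forces $\Ric\equiv 0$. Plugging $\Ric=0$ back into (\ref{eq:bi2}) gives $R(X,\overline{X},Y,\overline{Y})\ge 0$ for all $X,Y\in T'M_1$; tracing in $X$ recovers $\Ric(Y,\overline{Y})=0$, so every bisectional term must vanish and in particular $H\equiv 0$. This forces the metric to be flat, contradicting the compactness of the simply-connected $M_1$. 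Hence when the universal cover of $M$ has no flat de Rham factor, a suitable finite cover of $M$ is precisely the rationally connected factor, and rational connectedness descends to $M$ itself.

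The one delicate point is to be sure the rank-one evaluation really produces (\ref{eq:bi2}) with a plus sign instead of the minus sign of (\ref{eq:bi}); everything that follows amounts to tracing linear inequalities in a single unitary frame and a verbatim reuse of the Calabi--Yau exclusion argument from Theorem \ref{thm:24}.
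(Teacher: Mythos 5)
Your proposal is correct and is essentially the argument the paper intends: the paper omits the proof, stating it is ``strictly analogous'' to the CQB case, and your rank-one evaluation of $\mbox{}^d$CQB, the summed companion inequality giving $n\,\Ric(X,\overline{X})\ge H(X)$ and hence $\Ric\ge\frac{1}{n+1}\Ric^{+}$, and the reuse of the Calabi--Yau exclusion from Theorem \ref{thm:24} are exactly the sign-flipped analogues of Theorems \ref{thm:21} and \ref{thm:24}. The one ``delicate point'' you flag checks out: with $A(E_1)=\overline{Y}$ the second group of terms in (\ref{eq:dcqb1}) contributes $+R(X,\overline{X},Y,\overline{Y})/|X|^2$, so the plus sign in (\ref{eq:bi2}) is right.
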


\begin{corollary}\label{coro:22} 
Let $M^n = M_1 \times M_2$ be a product K\"ahler manifold. Then for any $1\le k\le n$, $M$ has positive (or nonnegative) $\mbox{}^d\!$CQB$_k$ if and only if both $M_1$ and $M_2$ have positive (or nonnegative) $\mbox{}^d\!$CQB$_k$.
\end{corollary}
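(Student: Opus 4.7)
The plan is to mirror the proof of the earlier product corollary for $\mbox{CQB}$, by establishing an additive decomposition of ${}^d\mbox{CQB}^M$ along the product. First, fix a unitary frame $E=\{E_1,\ldots,E_r;E_{r+1},\ldots,E_n\}$ compatible with the splitting, where the first $r$ vectors frame $T'M_1$ and the rest frame $T'M_2$, and use indices $i,j$ for $M_1$ and $\alpha,\beta$ for $M_2$. Any linear map $A:T'M\to T''M$ then decomposes into four blocks
$$A(E_i)=A_1(E_i)+B(E_i),\qquad A(E_\alpha)=C(E_\alpha)+A_2(E_\alpha),$$
with $A_s:T'M_s\to T''M_s$ for $s=1,2$, $B:T'M_1\to T''M_2$, and $C:T'M_2\to T''M_1$. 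Crucially, if $A$ has rank at most $k$, then each of the four blocks has rank at most $k$ as well.

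Next, I would expand ${}^d\mbox{CQB}^M(A)$ using two features of the product metric: $R^M_{a\bar bc\bar d}$ vanishes whenever its four indices straddle the two factors, and $\mbox{Ric}^M$ is block-diagonal. The Ricci sum (the first term in the definition of ${}^d\mbox{CQB}$) splits, for each index $a$, into $\mbox{Ric}^{M_1}$ and $\mbox{Ric}^{M_2}$ contracted against the two factor-projections of $A(E_a)$, producing four pieces coming from $A_1$, $A_2$, $B$, and $C$. The second sum contains $R^M(E_a,\bar E_b,\cdot,\cdot)$, which forces $a,b$ to lie in the same factor and then the remaining two arguments to lie in that same factor as well, so only the $A_1$- and $A_2$-contributions survive. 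Collecting terms yields
$${}^d\mbox{CQB}^M(A)={}^d\mbox{CQB}^{M_1}(A_1)+{}^d\mbox{CQB}^{M_2}(A_2)+\sum_i\mbox{Ric}^{M_2}(\overline{B(E_i)},B(E_i))+\sum_\alpha\mbox{Ric}^{M_1}(\overline{C(E_\alpha)},C(E_\alpha)).$$

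With this identity in hand, both directions follow. For the ``only if'' direction, a nonzero $A_1:T'M_1\to T''M_1$ of rank $\leq k$ extends by zero to a rank-$\leq k$ map $A$ on $M$ satisfying ${}^d\mbox{CQB}^M(A)={}^d\mbox{CQB}^{M_1}(A_1)$, and symmetrically for $M_2$. For the ``if'' direction, the theorem immediately preceding this corollary shows that ${}^d\mbox{CQB}_1\geq 0$ (resp. $>0$) on each factor implies $\mbox{Ric}^{M_s}\geq 0$ (resp. $>0$), so the two off-diagonal Ricci terms in the identity are automatically $\geq 0$, and are strictly positive on any nonzero off-diagonal block in the positive case.

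The main subtlety, essentially the only point requiring care, is the positive case when $A\neq 0$ but $A_1=A_2=0$: then the factor ${}^d\mbox{CQB}$-contributions vanish and strict positivity of ${}^d\mbox{CQB}^M(A)$ must come entirely from the Ricci terms involving $B$ or $C$, which is precisely what strict positivity of $\mbox{Ric}^{M_s}$ on the factors delivers via the preceding theorem. The rank-$k$ constraint is preserved throughout because projection onto blocks cannot increase rank, and the $\geq 0$ case is entirely analogous.
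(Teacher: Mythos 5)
Your proof is correct and is essentially the argument the paper intends: the paper states this corollary without proof as ``strictly analogous'' to the CQB product corollary, whose proof is exactly the block decomposition you derive, with the cross blocks contributing only Ricci terms of the factors. Your identification of the one delicate point (strict positivity when only the off-diagonal blocks $B$, $C$ are nonzero, handled by the positivity of the factor Ricci curvatures from the preceding theorem) and the observation that block projection does not increase rank complete the argument as the authors envision it.
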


As noted in \cite{N}, when $(M^n,g)$ is K\"ahler-Einstein, the CQB or $^d$CQB conditions are given by the eigenvalue information for the curvature operator $Q$ introduced by Calabi-Vessentini \cite{CV} and Itoh \cite{Itoh}, which is the adjoint operator from $S^2(T'M)$ into itself, defined by
$$ \langle Q(X\!\cdot \!Y), \overline{Z\!\cdot \!W} \rangle = R(X, \overline{Z}, Y ,\overline{W})  $$
for any type $(1,0)$ tangent vectors $X$, $Y$, $Z$, $W$ in $T'M$, where $X\!\cdot \!Y = \frac{1}{2}(X\otimes Y + Y\otimes  X)$ and the induced metric on $S^2(T'M)$ is given by
$$ \langle X\!\cdot \!Y, \ \overline{Z\!\cdot \!W} \rangle = \frac{1}{2} \left( g(X, \overline{Z}) g(Y, \overline{W}) + g(X, \overline{Z}) g(Y, \overline{W}) \right) . $$
If we denote by $\mu $ the constant Ricci curvature of $M$, and by $\lambda_1$, $\lambda_N$ the smallest and largest eigenvalue of $Q$, respectively, then
$$ \mbox{CQB} > 0  \iff \mu > \lambda _N, \ \ \ \ \mbox{and} \ \ \ \  ^d\mbox{CQB} > 0 \iff  \lambda_1 > -\mu . $$

In  section 4, we shall examine the eigenvalue bounds for the simplest kind of K\"ahler C-spaces, namely, the Type $A$ spaces, and check the sign for CQB and $^d$CQB.

\section{Fanoness of the nonflat factor}
In this section we  study further the factor in the splitting provided by Theorem \ref{thm:24}.
If we assume that the manifold $(M, g)$ in Theorem \ref{thm:24} is simply-connected we show that $M$ is a  Fano manifold. Precisely  we have the following slightly stronger result.

\begin{theorem}\label{thm:31} Assume that $(M, g)$ be a compact K\"ahler manifold with CQB$\mbox{}_1\ge 0$ (or $\mbox{}^d\!$CQB$\mbox{}_1\ge 0$). Assume that the universal cover  $\tilde{M}$ does not have a flat de Rham factor. Then $M$ must be Fano. In fact the K\"ahler-Ricci flow evolves the metric $g$ into a K\"ahler metric $g(t)_{t\in(0, \epsilon)}$ with positive Ricci curvature for some $\epsilon$.
\end{theorem}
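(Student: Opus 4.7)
The plan is to follow the B\"ohm--Wilking philosophy: run the K\"ahler--Ricci flow $\frac{\p}{\p t} g(t) = -\Ric(g(t))$ with $g(0) = g$ for a short time, show that the convex cone cut out by CQB$_1 \geq 0$ (resp.\ $\mbox{}^d$CQB$_1 \geq 0$) is preserved under the flow, and then exhibit a one-parameter family of invariant convex subsets that pinches the curvature into the open cone $\{\Ric > 0\}$ for every $t > 0$. Once $\Ric(g(t)) > 0$ we obtain $c_1(M) > 0$ and hence $M$ is Fano.

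First I would establish preservation of the cone. By the equivalent formulations \eqref{eq:bi} and \eqref{eq:bi2}, the two hypotheses are defined by closed convex $U(n)$-invariant cones $\mathcal{C}^{\pm}$ in the space of algebraic K\"ahler curvature tensors. Hamilton's maximum principle for systems reduces invariance along the flow to the algebraic claim that the reaction term $Q(R)$ in Hamilton's ODE $\frac{d}{dt} R = Q(R)$ points into $\mathcal{C}^{\pm}$ at every boundary curvature tensor. Such boundary tensors are characterized by vectors $X_0, Y_0 \in T'M$ achieving equality in \eqref{eq:bi} or \eqref{eq:bi2}, and the desired inequality on $Q(R)$ should follow by direct computation using the K\"ahler Bianchi identities and the definition of CQB$_1$.

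Next, following B\"ohm--Wilking, I would construct a one-parameter family $\mathcal{C}_s^{\pm} \subset \mathcal{C}^{\pm}$ by tightening the defining inequality by a shift of the form $s \cdot \Scal \cdot |X|^2 |Y|^2$ (or a suitable $Q$-compatible variant involving a multiple of the constant-holomorphic-sectional-curvature tensor), arranged so that every nonzero $R \in \mathcal{C}_s^{\pm}$ with $s > 0$ automatically satisfies $\Ric > 0$. A time-dependent parameter $s(t)$ is then chosen so that $\mathcal{C}_{s(t)}^{\pm}$ remains dynamically invariant under $Q$, exploiting the interplay between the shifted inequality and the reaction term. Hamilton's maximum principle then yields $R(g(t)) \in \mathcal{C}_{s(t)}^{\pm}$ for small $t > 0$, hence $\Ric(g(t)) > 0$. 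If instead the flow only kept $\Ric \geq 0$ on some positive time interval, the strong maximum principle applied to the evolution of $\Ric$ would force a parallel null eigendirection, giving a flat K\"ahler de Rham factor in the universal cover of $M$ and contradicting the hypothesis.

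The main obstacle will be step two: identifying the correct shift direction and parameter schedule $s(t)$ so that $\mathcal{C}_{s(t)}^{\pm}$ is actually preserved by $Q$. Since CQB$_1 \geq 0$ and $\mbox{}^d$CQB$_1 \geq 0$ are strictly weaker than nonnegative bisectional (let alone sectional) curvature, the algebraic relations governing $Q$ on $\p \mathcal{C}^{\pm}$ must be rederived from scratch, and the standard B\"ohm--Wilking shift cannot be quoted directly. Moreover the two cones differ by a sign in the bisectional term in \eqref{eq:bi} versus \eqref{eq:bi2}, so the two cases will require parallel but genuinely distinct algebraic verifications.
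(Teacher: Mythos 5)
There is a genuine gap, and it sits exactly where you flagged your "main obstacle." Both load-bearing steps of your plan --- (i) that the reaction term of Hamilton's ODE points into the cone cut out by CQB$_1\ge 0$ at every boundary tensor, and (ii) that a B\"ohm--Wilking-type pinching family $\mathcal{C}^{\pm}_{s(t)}$ with interior contained in $\{\Ric>0\}$ can be made dynamically invariant --- are asserted ("should follow by direct computation") rather than carried out, and neither is established in the paper. In fact the paper's proof is structured precisely to avoid both. The authors never claim that CQB$_1\ge 0$ is preserved by the K\"ahler--Ricci flow. Instead they use the hypothesis only at $t=0$: by Theorem \ref{thm:21} and \eqref{eq:bi}, the Hermitian form $A(X,\overline{Y})=\Ric(X,\overline{Y})-R_{X\overline{Y}Z\overline{Z}}$ is nonnegative for each unit $Z$, which yields the quantitative pinching $|\Ric(X,\overline{Y})-R_{X\overline{Y}Z\overline{Z}}|^2\le (n-1)^2\,\Ric(X,\overline{X})\Ric(Y,\overline{Y})$ at the initial time. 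What is then propagated is not the original cone but the \emph{relaxed, time-expanding} family of convex sets $C(t)$ defined by $\Ric\ge 0$, the above pinching with constant $D_1+tE_1$, and a norm bound $\|R\|\le D_2+tE_2$ (conditions \eqref{eq:31}--\eqref{eq:33}); choosing $E_1$ large relative to $D_1^2D_2$ makes the ODE derivative estimates close. This inverts your step two: the invariant sets grow rather than shrink, so they never force $\Ric>0$ by themselves, and positivity for $t>0$ is instead extracted at the end by the strong maximum principle.

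Your fallback endgame is the correct one and essentially matches the paper's: if $\Ric(g(t))$ had a kernel for a sequence $t_i\to 0$, the null directions would split off a factor of the universal cover, and that factor is genuinely flat because within $C(t)$ the pinching \eqref{eq:32} (via polarization) forces the kernel of $\Ric$ to lie in the kernel of the full curvature tensor --- contradicting the no-flat-de-Rham-factor hypothesis. So the architecture you want is: (a) prove some flow-invariant condition strong enough that $\Ker(\Ric)\subset\Ker(R)$ persists, then (b) apply the splitting argument. The paper supplies (a) with the relaxed sets $C(t)$; your proposal leaves (a) resting on the unproved invariance of the CQB$_1$ cone itself, which is a materially harder (and possibly false) statement. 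To repair the proposal, replace "preservation of $\mathcal{C}^{\pm}$" by preservation of a time-relaxed quantitative consequence of $\mathcal{C}^{\pm}$ along the lines of \eqref{eq:31}--\eqref{eq:33}, and drop the pinching family entirely in favor of the strong maximum principle.
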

\begin{proof} Here we adapt an idea of B\"ohm-Wilking in \cite{BW} where the authors proved that the Ricci flow deformation of a metric with nonnegative sectional curvature of a compact manifold with finite fundamental group evolves the initial metric into one with positive Ricci curvature for some short time. The assumption on the fundamental group  is to effectively rule out the flat de Rham factor in its universal cover. A dynamic version of Hamilton's maximum principle (cf. \S 1 of \cite{BW}, Chapter 10 of \cite{Chowetc}, as well as \cite{Ni-12}) was employed. Since  CQB$\mbox{}_1\ge0$  (or $\mbox{}^d$CQB$\mbox{}_1\ge 0$) is different from the sectional curvature being nonnegative,  we need to construct a different collection of invariant time-dependent convex sets and prove the corresponding estimates to show that $\Ric(g(t))>0$. We shall focus on the case  CQB$\mbox{}_1\ge0$ since the other case is similar.

Let $g(t)$ be the solution to K\"ahler-Ricci flow with initial metric $g$ satisfying CQB$\mbox{}_1\ge 0$:
$$
\frac{\partial}{\partial t} g_{\alpha\bar{\beta}}(t)=-R_{\alpha\bar{\beta}},\quad g(0)=g
$$
where $R_{\alpha\bar{\beta}}$ denoted the Ricci curvature of $g(t)$.
  By Hamilton's maximum principle we can focus on the study of a collection of sets $\{C(t)\}$, each  being a convex subset of the space of algebraic  curvature operators satisfying the following conditions:
\begin{eqnarray}
&& 0\le  \Ric(X,\overline{X}), \ \forall \, X\in T_{x}'M;\label{eq:31}\\
&&\left| \Ric(X, \overline{Y})-R^{g(t)}_{X\overline{Y}Z\overline{Z}}\right|^2 \le (D_1+t E_1) \Ric(X, \overline{X})\cdot \Ric(Y, \overline{Y}), \ \ \forall \, X, Y, Z, |Z|=1; \ \ \ \ \label{eq:32}\\
&&\|R\|\le D_2+tE_2.\label{eq:33}
\end{eqnarray}
Here in (\ref{eq:33}) $R$ is viewed as the curvature operator and $\|\cdot\|$ is the natural norm extended to the corresponding tensors from the K\"ahler metric on $T_x'M$.

First we need to check that the sets $C(t)$  are convex. Clearly (\ref{eq:31}) and (\ref{eq:33}) are convex conditions. For (\ref{eq:32}) let $R$ and $S$ be two K\"ahler curvature operators. We shall check that if (\ref{eq:32}) holds for $R$ and $S$ then it holds for $\eta R +(1-\eta) S$ for $\eta \in [0, 1]$. Given $Z$ with $|Z|=1$, $\Ric(X, \overline{Y})-R_{X\overline{Y}Z\overline{Z}}$ is a Hermitian symmetric form on $T_x'M$, which we denote it as $A$, and denote the corresponding one for the curvature operator $S$ as $B$. We also denote $R(X,\overline{X})$ and $R(Y, \overline{Y})$ as $a_1$ and $a_2$. Similarly we have $b_1$ and $b_2$ for the corresponding Ricci of the curvature operator $S$. Then
\begin{eqnarray*}
|\eta A +(1-\eta)B|^2 &=& \eta^2 |A|^2+ \eta(1-\eta) (A\overline{B}+B\overline{A})+(1-\eta)^2 |B|^2\\
&\le&  \eta^2 |A|^2+ 2\eta(1-\eta) |A|\, |B| +(1-\eta)^2 |B|^2\\
&\le&(D_1+t E_1)\left(\eta^2 a_1 a_2 +2\eta(1-\eta)\sqrt{a_1 a_2 b_1 b_2}+(1-\eta)^2 b_1 b_2\right)\\
&\le &(D_1+t E_1)\left(\eta a_1+(1-\eta) b_1\right)\left( \eta a_2+(1-\eta) b_2\right).
\end{eqnarray*}
This completes the proof of the convexity of $C(t)$.
Recall that after applying the Uhlenbeck's trick \cite{Hamilton} the K\"ahler-Ricci flow evolves the curvature tensor $R$ by the following PDE:
\begin{equation}\label{eq:34}
\left(\frac{\partial}{\partial t}-\Delta\right) R_{\alpha\bar{\beta}\gamma\bar{\delta}}=R_{\alpha \bar{\beta}p\bar{q}}R_{\gamma\bar{\delta}q\bar{p}}+R_{\alpha \bar{\delta}p\bar{q}}R_{\gamma\bar{\beta}q\bar{p}}-R_{\alpha \bar{p}\gamma \bar{q}}R_{p\bar{\beta} q\bar{\delta}}.
\end{equation}
Here computation is with respect to a unitary frame. Tracing it gives the evolution equation of the Ricci curvature:
\begin{equation}\label{eq:35}
\left(\frac{\partial}{\partial t}-\Delta\right) R_{\alpha\bar{\beta}}=R_{\alpha\bar{\beta}p\bar{q}}R_{q\bar{p}}.
\end{equation}
We shall show that the set $C(t)$ defined by (\ref{eq:31}), (\ref{eq:32}) and (\ref{eq:33}) are invariant under the equation (\ref{eq:34}) and (\ref{eq:35}). Hamilton's maximum principle (see \S 1 of \cite{BW}) allows us to drop the diffusion term in verifying the invariance.

We first show that (\ref{eq:32}) holds at $t=0$ since by Theorem \ref{thm:21} we have that (\ref{eq:31}) holds at $t=0$, and it is easy to choose $D_2$ and $E_2$ to make (\ref{eq:33}) hold if $\epsilon$ is sufficiently small. By Theorem \ref{thm:21}, in particular (\ref{eq:bi}),  we have that for any $Z$ with $|Z|=1$, $A(X, \overline{Y})\doteqdot\Ric(X,\bar{Y})-R_{X\bar{Y} Z\bar{Z}}$ is a Hermitian symmetric tensor which is nonnegative. Diagonalize $A$ with a unitary frame $\{E_i\}$ and eigenvalues $\{\lambda_i\}$.  Then we compute  that for $X=x^i E_i$ and $Y=y^jE_j$
\begin{eqnarray*}
\left| A_{i\bar{j}}x^i\bar{y}^j\right|^2 &=&\left| \sum \lambda_i x^i\bar{y}^j\right|^2 \le \sum \lambda_i |x^i|^2 \sum \lambda_j |y^j|^2\\
&=&(\Ric(X,\bar{X})-R_{X\bar{X} Z\bar{Z}})\cdot (\Ric(Y,\bar{Y})-R_{Y\bar{Y} Z\bar{Z}})\\
&\le& \sum_{i=1}^n \left(\Ric(X,\bar{X})-R_{X\bar{X} E'_i\bar{E}'_i}\right)\sum_{j=1}^n\left(\Ric(Y,\bar{Y})-R_{Y\bar{Y} E'_j\bar{E}'_j}\right) \\
&=&(n-1)^2 \Ric(X,\overline{X}) \Ric(Y, \overline{Y}).
\end{eqnarray*}
Here $\{E'_j\}$ is another unitary frame so chosen that $E'_1=Z$. Hence if we choose $D_1=(n-1)^2$ the estimate (\ref{eq:32}) holds at $t=0$.

Now we need to verify that the PDE/ODE preserves the set $C(t)$. For that we only need to prove that the time derivative of the convex condition lies inside the tangent cone of the convex set. The trick of \cite{BW} is  to chose $E_1$ sufficiently large (compared with $D_1, D_2, E_2$) to make sure that (\ref{eq:32}) stay invariant  under the PDE (\ref{eq:34}) (or the corresponding ODE $\frac{d}{dt}\Rm =\Rm^2 +\Rm^{\#}$) for $t\in [0, \epsilon]$ if $\epsilon$ is very small.
With a  suitably chosen $D_2$, it is easy to have (\ref{eq:33}). In fact we may choose $E_2=1$ if $\epsilon$ is small. For (\ref{eq:31}), if $\Ric(X, \bar{X})$ ever becomes zero for some $X$, then within $C(t)$ by (\ref{eq:32}), we have
$$
\Ric(X, \overline{Y})-R_{X\overline{Y}Z\overline{Z}}=0,\quad \forall \ Y, Z.
$$
 This then via the polarization  implies that $R_{X \overline{Y} Z \overline{W}}=0, \, \forall \, Y, Z, W$. Thus (\ref{eq:35}) implies $\frac{\partial}{\partial t}\Ric(X, \overline{X})\ge R_{X\overline{X}p\bar{q}}R_{q\bar{p}}=0$. This shows that (\ref{eq:31}) is  preserved by (\ref{eq:35}).

As in \cite{BW}, the main issue is to show that (\ref{eq:32}) is preserved under the flow, namely (\ref{eq:34}) and (\ref{eq:35}). For this it suffices to show that as long as $R$ is in $C(t)$,
\begin{equation}\label{eq:wanted}
\frac{\partial}{\partial t} \left((D_1+t E_1) \Ric(X, \overline{X})\cdot \Ric(Y, \overline{Y})-\left| \Ric(X, \overline{Y})-R_{X\overline{Y}Z\overline{Z}}\right|^2\right)\ge 0.
\end{equation}
Direct calculation shows that the left hand side of the above inequality is
\begin{eqnarray*}
 && E_1 \Ric(X, \overline{X})\cdot \Ric(Y, \overline{Y})+(D_1+t E_1)\left(R_{X\bar{X}p\bar{q}}\Ric(Y, \overline{Y})+R_{Y\bar{Y}p\bar{q}}\Ric(X, \overline{X})\right)R_{q\bar{p}}\\
 &-&2\Re \left( \left(\frac{\partial}{\partial t} \Ric(X, \overline{Y})-\frac{\partial}{\partial t} R_{X\overline{Y}Z\overline{Z}}\right) \overline{(\Ric(X, \overline{Y})-R_{X\overline{Y}Z\overline{Z}})}\right).
\end{eqnarray*}
We shall show that for $\epsilon$ small and $t\in [0, \epsilon]$ the above is nonnegative. Namely the first term dominates the rests. By (\ref{eq:32}), by letting  $\epsilon\le \frac{1}{E_1}$ (with $E_1$ to be decided later), $(D_1+tE_1)\le  2D_1$. In the mean time  $E_1$ is chosen to be large comparing with $D_1^2D_2$.  First (\ref{eq:32}), together with $|\Ric(X, \overline{Y})|\le \sqrt{\Ric(X, \overline{X})\Ric(Y,\overline{Y})}$,   imply that
\begin{equation}\label{eq:help1}
|R_{X\overline{Y}Z\overline{Z}}|\le 4D_1\sqrt{\Ric(X, \overline{X})\Ric(Y,\overline{Y})}
\end{equation}
which then implies that
$$
\left|R_{X\bar{X}p\bar{q}}R_{q\bar{p}}\right|\le 4nD_1D_2\Ric(X,\overline{X}).
$$
This, together with   $t E_1\le 1$,  implies that
\begin{equation}\label{eq:-est23}
(D_1+t E_1)\left(R_{X\bar{X}p\bar{q}}\Ric(Y, \overline{Y})+R_{Y\bar{Y}p\bar{q}}\Ric(X, \overline{X})\right)R_{q\bar{p}}\ge -16nD_1^2 D_2 \Ric(X, \overline{X})\cdot Ric(Y, \overline{Y}).
\end{equation}
To handle the term involving $\frac{\partial}{\partial t} R_{X\overline{Y}Z\overline{Z}}$ we observe the following estimates:
\begin{eqnarray}\label{eq:help2}
|R_{X\overline{U}Z\overline{W}}|&\le& 32n D_1 \sqrt{nD_2}\sqrt{\Ric(X, \overline{X})}, \\
|R_{Y\overline{U}Z\overline{W}}|&\le& 32n D_1\sqrt{nD_2}\sqrt{\Ric(Y, \overline{Y})}, \quad \forall \ U, Z, W, \, |U|=|Z|=|W|=1. \label{eq:help3}
\end{eqnarray}
These can be derived easily out of (\ref{eq:help1}) and (\ref{eq:33}). Now note that
$$
\left| \overline{(\Ric(X, \overline{Y})-R_{X\overline{Y}Z\overline{Z}})}\right|\le\sqrt{2D_1}\sqrt{\Ric(X, \overline{X})\Ric(Y,\overline{Y})}.
$$
Hence we only need to establish that
$$
\left|\left(\frac{\partial}{\partial t} \Ric(X, \overline{Y})-\frac{\partial}{\partial t} R_{X\overline{Y}Z\overline{Z}}\right)\right|\le C(D_1, D_2, n)\sqrt{\Ric(X, \overline{X})\Ric(Y,\overline{Y})}
$$
for some positive $C$ depends on $D_1, D_2$ and $n$.
By  (\ref{eq:34}) and (\ref{eq:35}) we have that
\begin{eqnarray*}\left(\frac{\partial}{\partial t} \Ric(X, \overline{Y})-\frac{\partial}{\partial t} R_{X\overline{Y}Z\overline{Z}}\right)&=&R_{X\overline{Y}Z\overline{W}}\Ric_{Z\overline{W}}-R_{Z\overline{Z}p\bar{q}}
R_{q\bar{p}X\overline{Y}}\\
&\,& -R_{Z\overline{Y}q\bar{p}}R_{p\bar{q}X\overline{Z}}+R_{Z\bar{p}X\bar{q}}R_{p\overline{Z} q\overline{Y}}.
\end{eqnarray*}
Putting Estimates (\ref{eq:help1}), (\ref{eq:help2}) and (\ref{eq:help3}) together we  have the estimate we want. Taking $E_1\ge 100 C(D_1, D_2, n)$ we  have  proved (\ref{eq:wanted}).  Hence $\{C(t)\}$ is an invariant collection of convex subsets under the K\"ahler-Ricci flow.

If for some $t\in (0, \epsilon)$, $\Ric(g(t))$ has a nontrivial kernel, the strong maximum principle (see for example, pages 675-676 of \cite{BW})  takes effect to imply that the universal cover splits a  factor according to the distribution provided by the vectors in the kernel of the Ricci curvature. The factor is flat since by (\ref{eq:32}) the kernel of $\Ric$ would be the kernel of the curvature tensor. If there exists a sequence of such $t_i\to 0$ this implies that the universal cover contains a flat De Rham factor.  This is a contradiction. Thus we have proved that $\Ric(g(t))>0$ for any $t\in (0, \epsilon')$ for some $\epsilon'$ small.
\end{proof}

An argument similar as \cite{BW} was also employed by Liu in \cite{Liu} to the non-positive setting to conclude that the deformed metric has negative Ricci curvature if the initial metric has non-positive bisectional curvature.

The conditions CQB$\ge 0$ and $^d$CQB$\ge 0$ can have their corresponding Riemannian versions: We say that a Riemannian manifold $(M^n, g)$ has CQB$^\mathcal{R}$$\ge 0$, if for any $x\in M$ and an orthonormal frame  $\{e_i\}$, it holds that
\begin{equation}\label{eq:cqb-real}
\sum_{j=1}^n \Ric(A(e_j), A(e_j))-\sum_{i, j=1}^n R(A(e_i), e_j, A(e_j), e_i)\ge 0, \ \forall \,\mbox{ linear maps } A:T_xM \to T_xM.
\end{equation}
For $^d$CQB$^\mathcal{R}$$\ge 0$ we require that
\begin{equation}\label{eq:dcqb-real}
\sum_{j=1}^n \Ric(A(e_j), A(e_j))+\sum_{i, j=1}^n R(A(e_i), e_j, A(e_j), e_i)\ge 0, \ \forall \, \mbox{ linear maps } A:T_xM \to T_xM.
\end{equation}
 If we restrict to $A$ of rank one we have similar conditions as (\ref{eq:bi}) and (\ref{eq:bi2}). Namely, CQB$^\mathcal{R}_1\ge 0$ is equivalent to
\begin{equation}\label{eq:sec}
|X|^2\Ric(Y, Y)-R(X, Y, X, Y)\ge 0.
\end{equation}
Similarly, $^d$CQB$^\mathcal{R}_1\ge 0$ is equivalent to
\begin{equation}\label{eq:sec2}
|X|^2\Ric(Y, Y)+R(X, Y, X, Y)\ge 0.
\end{equation}
It is easy to see that (\ref{eq:sec}) and (\ref{eq:sec2}) will each imply the nonnegativity of the Ricci curvature. By adapting the proof of Theorem \ref{thm:31} we have the following result.
\begin{theorem}\label{thm:32} Assume that $(M, g)$ be a compact Riemannian manifold with CQB$^\mathcal{R}\mbox{}_1\ge 0$ (or $\mbox{}^d\!$CQB$^\mathcal{R}\mbox{}_1\ge 0$). Assume that the universal cover  $\tilde{M}$ does not have a flat de Rham factor. Then $M$ admits a metric with positive Ricci. In particular its fundamental group is finite. In fact the flow evolves the metric $g$ into a metric $g(t)_{t\in(0, \epsilon)}$ with positive Ricci curvature for some $\epsilon$.
\end{theorem}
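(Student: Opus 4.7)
The plan is to transport the argument of Theorem~\ref{thm:31} from the K\"ahler--Ricci flow to Hamilton's Ricci flow $\frac{\partial}{\partial t}g=-2\Ric$ with initial metric $g$. Following the proof of Theorem~\ref{thm:31}, I would construct a family of time-dependent closed convex subsets $\{C(t)\}_{t\in[0,\epsilon]}$ of algebraic curvature operators defined by three conditions: (i) $\Ric(X,X)\ge 0$ for every $X$; (ii) $|\Ric(Y,W)-R(Z,Y,Z,W)|^{2}\le (D_{1}+tE_{1})\,\Ric(Y,Y)\,\Ric(W,W)$ for all $Y,W$ and every unit $Z$; and (iii) $\|R\|\le D_{2}+tE_{2}$. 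Convexity of (ii) is the same Cauchy--Schwarz argument used in the proof of Theorem~\ref{thm:31}, applied to the symmetric bilinear form $B_{Z}(Y,W)=\Ric(Y,W)-R(Z,Y,Z,W)$, which is nonnegative by (\ref{eq:sec}). To verify (ii) at $t=0$ with $D_{1}=(n-1)^{2}$, diagonalize $B_{Z}$ in an orthonormal eigenbasis, apply the scalar Cauchy--Schwarz to obtain $|B_{Z}(Y,W)|^{2}\le B_{Z}(Y,Y)B_{Z}(W,W)$, then bound each factor by summing $Z$ over an orthonormal frame $\{e_{i}\}$ and using $\sum_{i}R(e_{i},Y,e_{i},Y)=\Ric(Y,Y)$; (iii) is arranged for small $\epsilon$ by taking $E_{2}=1$ and $D_{2}>\|\Rm(g(0))\|$.

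The main technical step is invariance of $\{C(t)\}$ under the flow. Uhlenbeck's trick reduces the evolution to the ODE $\frac{d}{dt}R=R^{2}+R^{\#}$ on the bundle of algebraic curvature operators, which induces $\frac{d}{dt}\Ric(Y,Y)=2R(Y,e_{p},Y,e_{q})\Ric(e_{p},e_{q})$, and Hamilton's maximum principle allows us to drop the Laplacian. Preservation of (i) is immediate from (ii): wherever $\Ric(X,X)=0$, (ii) forces $R(Z,X,Z,W)=\Ric(X,W)=0$ for every unit $Z$ and every $W$; polarizing in $Z$ and using the pair symmetry of $R$ then gives $R(e_{p},X,e_{q},X)=0$ for all $p,q$, so $\frac{d}{dt}\Ric(X,X)=0$. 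For (iii), a Gr\"onwall comparison ensures it holds on $[0,\epsilon]$ once $D_{2},E_{2}$ are chosen appropriately.

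The heart of the argument is preservation of (ii). I would first combine (ii) with $|\Ric(Y,W)|\le\sqrt{\Ric(Y,Y)\Ric(W,W)}$ to obtain $|R(Z,Y,Z,W)|\le C_{1}\sqrt{\Ric(Y,Y)\Ric(W,W)}$ for every unit $Z$, then bootstrap through polarization and (iii) to $|R(Z,Y,U,W)|\le C_{2}\sqrt{\Ric(Y,Y)}$ for all unit $Z,U,W$, in the spirit of (\ref{eq:help2})--(\ref{eq:help3}). With these, all quadratic curvature terms arising from differentiating $\Ric(Y,W)-R(Z,Y,Z,W)$ along the ODE are bounded by $C(n,D_{1},D_{2})\sqrt{\Ric(Y,Y)\Ric(W,W)}$, and taking $E_{1}\gg C(n,D_{1},D_{2})$ forces the positive gain $E_{1}\Ric(Y,Y)\Ric(W,W)$ coming from differentiating $(D_{1}+tE_{1})$ to dominate, which establishes the analogue of (\ref{eq:wanted}) on $[0,\epsilon]$.

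Once $\{C(t)\}$ is invariant, Hamilton's strong maximum principle for the Ricci tensor along the Ricci flow gives the conclusion. If $\Ric(g(t))$ has a nontrivial kernel for some sequence $t_{i}\to 0^{+}$, then by (ii) this kernel annihilates the full curvature tensor at those points (in particular every $2$-plane meeting the kernel has zero sectional curvature), and the splitting theorem forces the universal cover of $M$ to contain a flat de Rham factor, contradicting the hypothesis. Hence $\Ric(g(t))>0$ on $(0,\epsilon')$; Myers' theorem then gives $\pi_{1}(M)$ finite. The $^{d}$CQB$^{\mathcal{R}}_{1}\ge 0$ case is handled identically after replacing $B_{Z}(Y,W)$ by $\Ric(Y,W)+R(Z,Y,Z,W)$ throughout. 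The main obstacle is the quantitative estimate (ii) along the ODE: unlike the K\"ahler case, the real curvature operator admits fewer algebraic symmetries, so the bookkeeping in $R^{2}+R^{\#}$ is more delicate, and one must verify carefully that every arising term is controllable using only (ii) and (iii).
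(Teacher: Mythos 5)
Your proposal is precisely the adaptation the paper intends: the paper's entire argument for Theorem \ref{thm:32} is the remark that one adapts the proof of Theorem \ref{thm:31}, and your convex sets (i)--(iii), the constant $D_1=(n-1)^2$ obtained by summing $Z$ over an orthonormal frame, the preservation of $\Ric\ge 0$ via the vanishing of $R(Z,X,U,X)$ on the kernel, and the concluding strong-maximum-principle/de Rham splitting step all match that adaptation. The one substantive point you leave implicit --- and the one place where a verbatim transcription of the K\"ahler proof would break --- is the bootstrap from $|R(Z,Y,Z,W)|\le C\sqrt{\Ric(Y,Y)\Ric(W,W)}$ to $|R(Z,Y,U,W)|\le C'\sqrt{\Ric(Y,Y)}$ (the analogue of (\ref{eq:help2})--(\ref{eq:help3})). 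In the K\"ahler case the sesquilinear polarization identity recovers $R_{X\bar{U}Z\bar{W}}$ entirely from its diagonal values, but in the real case polarizing the quadratic form $Z\mapsto R(Z,Y,Z,W)$ only controls the part of $R(Z,Y,U,W)$ that is symmetric in $(Z,U)$; ``polarization and (iii)'' alone do not suffice. The antisymmetric part equals $\tfrac12 R(Y,W,Z,U)$ by the first Bianchi identity, and to bound it one can set $c(W,Z,U):=R(Y,W,Z,U)$ and check, using the pair symmetry $R(Z,Y,U,W)=-R(Y,Z,U,W)$ together with Bianchi, that
\begin{equation*}
c(W,Z,U)+2\,c(Z,U,W)=-2S_W(Z,U),
\end{equation*}
where $S_W$ denotes the controlled symmetric part; the three cyclic versions of this identity form a linear system with coefficient matrix of determinant $9$, so $c$, and hence the full component $R(Z,Y,U,W)$, is expressed in terms of controlled quantities with a dimensional constant. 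With that algebraic lemma supplied, your argument closes and coincides with the paper's.
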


The notions of CQB$^\mathcal{R}$ and  $^d$CQB$^\mathcal{R}$ are not geometrically motivated as CQB and $^d$CQB. We are grateful to Professor Richard Hamilton for suggesting (\ref{eq:sec}) and Theorem \ref{thm:32}  to the first named author. The nonnegative/positive conditions of these curvature respects the product structure (hence there is no difficult problem of a corresponding Hopf's conjecture for these curvatures).

\begin{proposition}\label{prop:31}
Let $M^n = M_1 \times M_2$ be a product  manifold. Then for any $1\le k\le n$, $M$ has positive (or nonnegative) $\mbox{}^d\!$CQB$\mbox{}^\mathcal{R}_k$ if and only if both $M_1$ and $M_2$ have positive (or nonnegative) $\mbox{}^d\!$CQB$\mbox{}^\mathcal{R}_k$.
\end{proposition}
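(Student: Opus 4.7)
The plan is to mirror the proof of Corollary \ref{coro:22} in the K\"ahler setting, replacing the sesquilinear bookkeeping by its real analogue. First I would fix $x\in M$ and choose an orthonormal frame $\{e_1,\ldots,e_n\}$ of $T_xM$ adapted to the product, with the first $r=\dim M_1$ vectors spanning $T_xM_1$ and the remaining vectors spanning $T_xM_2$. Any linear map $A:T_xM\to T_xM$ of rank at most $k$ then decomposes into four blocks $B:T_xM_1\to T_xM_1$, $C:T_xM_2\to T_xM_2$, $D:T_xM_1\to T_xM_2$ and $E:T_xM_2\to T_xM_1$, each of rank at most $k$.

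Next I would use two standard facts for a Riemannian product: $\Ric$ splits as $\Ric_1\oplus\Ric_2$, and the Riemann tensor $R$ vanishes on any four-tuple of tangent vectors that is not entirely contained in a single factor. Substituting the block decomposition into (\ref{eq:dcqb-real}) and splitting the sum over $(i,j)$ by factor, the only surviving $R$-terms are those with $i,j$ both in $\{1,\ldots,r\}$ or both in $\{r+1,\ldots,n\}$, and these involve only $B$ or only $C$. The result is the identity
\begin{align*}
{}^d\mathrm{CQB}^{\mathcal R}_M(A) &= {}^d\mathrm{CQB}^{\mathcal R}_{M_1}(B) + {}^d\mathrm{CQB}^{\mathcal R}_{M_2}(C) \\
&\qquad + \sum_{j=r+1}^{n}\Ric_1(E(e_j),E(e_j)) + \sum_{j=1}^{r}\Ric_2(D(e_j),D(e_j)),
\end{align*}
where the first two summands on the right are the intrinsic ${}^d\mathrm{CQB}^{\mathcal R}$ of each factor evaluated on a map of rank at most $k$, computed with respect to the induced sub-frames.

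From this identity both directions follow. For the ``only if'' implication I would test with maps $A$ supported on a single factor, for instance $C=D=E=0$ and $B$ arbitrary of rank $\le k$, which reduces ${}^d\mathrm{CQB}^{\mathcal R}_M(A)$ to ${}^d\mathrm{CQB}^{\mathcal R}_{M_1}(B)$ and transfers non-negativity (or positivity) from $M$ to $M_1$; swapping the roles of the factors handles $M_2$. For the ``if'' implication the first two terms on the right are non-negative by hypothesis and the rank bound, while the remaining Ricci contributions are non-negative because ${}^d\mathrm{CQB}^{\mathcal R}_1\ge 0$ on each factor already forces $\Ric_i\ge 0$ by tracing (\ref{eq:sec2}) exactly as in the proof of Theorem~\ref{thm:21}. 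The main obstacle is the strict case: I would need to verify that if $A\ne 0$ then at least one of the four summands is strictly positive. This requires upgrading $\Ric_i\ge 0$ to $\Ric_i>0$ on each factor under the strict hypothesis (again obtained by tracing (\ref{eq:sec2}) with strict sign), followed by a short case analysis on which of the blocks $B,C,D,E$ is nonzero: if $B\ne 0$ or $C\ne 0$, strict positivity comes from the corresponding intrinsic ${}^d\mathrm{CQB}^{\mathcal R}$; if only $D$ or $E$ is nonzero, it comes from the positive Ricci summand. I expect this last bookkeeping to be the only delicate step; everything else reduces to the block-decomposition identity above.
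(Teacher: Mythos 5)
Your proof is correct and follows exactly the route the paper intends: the paper states Proposition~\ref{prop:31} without proof, but its proof of the K\"ahler product corollary for CQB proceeds by the same block decomposition of $A$ in a product-adapted frame, yielding a splitting identity whose cross terms are precisely the Ricci contributions of the off-diagonal blocks. Your Riemannian identity, the reduction of the converse to maps supported in one factor, and the use of $\Ric_i\ge 0$ (resp.\ $>0$) from $\mbox{}^d$CQB$^{\mathcal R}_1\ge 0$ (resp.\ $>0$) to handle the cross terms all match that argument.
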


It is also not hard to check  CQB$^\mathcal{R}\ge 0$ and  $^d$CQB$^\mathcal{R}\ge 0$ for the locally  symmetric spaces.  A study of these conditions perhaps should begin with the homogenous Riemannian manifolds. Given that the homogenous manifolds with positive sectional curvature is quite scarce, these conditions perhaps are more inclusive. We leave the more detailed study in this direction to a future project.

\section{K\"ahler C-spaces}

Recall that K\"ahler C-spaces are the orbit spaces of the adjoint representation of compact semisimple Lie groups. Any such space is the product of simple K\"ahler C-spaces, and all simple K\"ahler C-spaces can be obtained in the following way.

Let ${\mathfrak g}$ be a simple complex Lie algebra. They are classified as the four classical sequences $A_r={\mathfrak s}{\mathfrak l}_{\,r+1}$ ($r\geq 1$), $B_r={\mathfrak s}{\mathfrak o}_{2r+1}$ ($r\geq 2$), $C_r={\mathfrak s}{\mathfrak p}_{2r}$ ($r\geq 3$), $D_r={\mathfrak s}{\mathfrak o}_{2r}$ ($(r\geq 4$) and the exceptional ones $E_6$, $E_7$, $E_8$, $F_4$ and $G_2$.

Let ${\mathfrak h}\subset {\mathfrak g}$ be a Cartan subalgebra with corresponding root system $\Delta \subset {\mathfrak h}^{\ast}$, so we have ${\mathfrak g} = {\mathfrak h} \oplus \bigoplus_{\alpha \in \Delta} {\mathbb C}E_{\alpha }$ where $E_{\alpha}$ is a root vector of $\alpha$. Let $r=\dim_{\mathbb C} {\mathfrak h}$ and fix a fundamental root system $\{ \alpha_1, \ldots , \alpha_r\}$. This gives an ordering in $\Delta$, and let $\Delta^+$, $\Delta^-$ be the set of positive or negative roots. Each $\beta \in \Delta^+$ can be expressed as $\beta = \sum_{i=1}^r n_i(\beta )\alpha_i$. For a fixed nonempty subset  $\Phi \subseteq \{ \alpha_1, \ldots , \alpha_r\}$, denote by
$$ \Delta^+_{\Phi} = \{ \beta \in \Delta^+ \mid n_i(\beta ) > 0 \ \mbox{for \ some } \alpha_i \in \Phi \}.$$
Let $G$ be the simple complex Lie group with Lie algebra $\mathfrak g$ and $L$ the closed subgroup with Lie subalgebra $\ {\mathfrak l} = {\mathfrak h}\oplus \bigoplus_{\beta \in \Delta \setminus \Delta^+_{\Phi} }{\mathbb C}E_{\beta} $. Then $M^n= G/L$ is a simple K\"ahler C-space, and all simple K\"ahler C-spaces can be obtained that way. The complex dimension $n$ of $M$ is equal to the cardinality $|\Delta^+_{\Phi}|$, while $b_2(M) = |\Phi |$. The tangent space $T'M$ at the point $eL$ can be identified with the subspace ${\mathfrak m}^+ = \bigoplus_{\beta \in \Delta^+_{\Phi } } {\mathbb C}E_{\beta}$ of ${\mathfrak g}$. Following Itoh \cite{Itoh}, we will denote this simple K\"ahler C-space as $M^n = ({\mathfrak g}, \Phi )$.

Next let us recall the  Chevalley basis (see \cite{Helgason} or Prop. 11 of \cite{Lohove}). Let $B$ be the Killing form of $\mathfrak g$. For each $\alpha \in \Delta$, let $H_{\alpha}$ be the unique element in ${\mathfrak h}$ such that $B(H_{\alpha }, H) = \alpha (H)$ for any $H\in {\mathfrak h}$. One can always choose root vectors $E_{\alpha}$ of ${\mathfrak g}_{\alpha }$ so that $[E_{\alpha}, E_{-\alpha }] = H_{\alpha}$, $\ \overline{E}_{\alpha } = - E_{- \alpha }$, and $N_{-\alpha , -\beta } = - N_{\alpha , \beta}$, where $N_{\alpha , \beta}$ is defined by $[E_{\alpha } , E_{\beta }] = N_{\alpha , \beta } E_{\alpha + \beta}$ for any $\alpha$, $\beta \in \Delta$ with $\alpha \neq - \beta$. When $\alpha+\beta$ is not a root, then $N_{\alpha , \beta} =0$.

Denote by $z_{\alpha} = B(E_{\alpha }, E_{-\alpha })$. Then $[E_{\alpha } , E_{-\alpha }] = z_{\alpha} H_{\alpha}$, and $z_{\alpha }$ are all real and $z_{-\alpha } = z_{\alpha }$ for each $\alpha$. Now we describe the invariant K\"ahler metrics on $M$. Such a metric $g$ makes  the tangent frame $F := \{ E_{\alpha }, \alpha \in {\mathfrak m}^+\}$ an orthogonal frame, with $g( E_{\alpha } , \overline{E}_{\alpha } ) = g_{\alpha } z_{\alpha}$ where $g_{\alpha}$ satisfy the following additive condition with respect to $\Phi$:

Write $\Phi = \{ \alpha_{i_1}, \ldots , \alpha_{i_m}\}$, where $1\leq i_1<\cdots <i_m\leq r$. Assign $g_{\alpha_{i_j}}=c_j>0$ arbitrarily, and require $g_{\beta} = n_{i_1}(\beta )c_1 + \cdots + n_{i_m}(\beta ) c_m$ for any $\beta = n_1(\beta )\alpha_1 + \cdots + n_r(\beta ) \alpha_r$ in $\Delta^+_{\Phi }$. Denote this metric as $g = g_{(c_1, \ldots , c_m)}$. So the invariant K\"ahler metrics on $M$ are determined by $m=b_2$ positive constants $c_1, \ldots , c_m$. It turns out (see \S 3.2 of \cite{Lohove}) that the metric is Einstein if and only if up to scaling, $\ g_{\alpha}= \sum_{\beta \in \Delta^+_{\Phi} } B(\alpha , \beta )\ $ for any $\alpha \in \Delta^+_{\Phi}$.

Following the computation initiated in \cite{Itoh}, Lohove (\cite{Lohove}, Prop 16) completed the curvature formula for $(M^n,g)$ under the Chevalley frame $F$, which we will describe below. For $\alpha$, $\beta$, $\gamma$, $\delta \in \Delta^+_{\Phi}$, write $R(E_{\alpha }, \overline{E}_{\beta}, E_{\gamma }, \overline{E}_{\delta})$ as $R_{\alpha \overline{\beta} \gamma \overline{\delta} }$. A highly distinctive property of the curvature of $M$ is that
\begin{equation}
 R_{\alpha \overline{\beta} \gamma \overline{\delta} } = 0  \ \ \ \mbox{unless} \ \  \alpha + \gamma = \beta + \delta .
\end{equation}

To take advantage of the symmetry of curvature for K\"ahler metrics, let us consider the order relation $<$ in $\Delta$: for $\alpha \neq \beta \in \Delta$, write $\alpha < \beta$ if $n_s(\alpha ) < n_s(\beta )$ but $n_i(\alpha ) = n_i (\beta )$ for all $1\leq i<s$ (if $s>1$).

For $R_{\alpha \overline{\beta} \gamma \overline{\delta} }$ with $\alpha + \gamma = \beta + \delta$, by K\"ahler symmetries, we may assume that $\alpha$ is the smallest, and $\beta \leq \delta$. If $\alpha =\beta$, then $\gamma = \delta$, so we are left with $R_{\alpha \overline{\alpha} \gamma \overline{\gamma} }$ where $\alpha \leq \gamma $. If $\alpha \neq \beta$, then we are left with the case $\alpha < \beta \leq \delta < \gamma$. In the first case, Lohove obtained that, for any $\alpha$, $\gamma \in \Delta^+_{\Phi}$ with $\alpha \leq \gamma$:
\begin{equation}
 R_{\alpha \overline{\alpha} \gamma \overline{\gamma} } = \left\{ \begin{array}{ll} g_{\alpha } z_{\alpha} z_{\gamma} B(H_{\alpha } , H_{\gamma }) + \frac{g_{\alpha } g_{\gamma } } { g_{\alpha +\gamma } }  z_{\alpha + \gamma } N^2_{\alpha , \gamma }, \ \ \ \ \mbox{if} \ \gamma - \alpha \in \Delta^+_{\Phi}; \\    g_{\gamma } z_{\alpha} z_{\gamma} B(H_{\alpha } , H_{\gamma }) + \frac{ g_{\gamma }^2 } { g_{\alpha +\gamma } }  z_{\alpha + \gamma } N^2_{\alpha , \gamma }, \ \ \ \ \mbox{if} \ \gamma - \alpha \notin \Delta^+_{\Phi}   \end{array} \right.
\end{equation}
For the second case, he obtained that, for any $\alpha $, $\beta$, $\gamma $, $\delta \in \Delta^+_{\Phi}$ with $\alpha < \beta \leq \delta < \gamma$ and $\alpha + \gamma = \beta + \delta$,
\begin{equation}
 R_{\alpha \overline{\beta} \gamma \overline{\delta} } = \left\{ \begin{array}{ll} g_{\alpha } z_{\alpha - \beta}  N_{\alpha , -\beta } N_{\gamma , -\delta}  + \frac{g_{\alpha } g_{\beta } } { g_{\alpha +\gamma } }  z_{\alpha + \gamma } N_{\alpha , \gamma } N_{\beta , \delta }, \ \ \ \ \mbox{if} \ \gamma - \beta \in \Delta^+_{\Phi}; \\    g_{\delta } z_{\alpha - \beta}  N_{\alpha , -\beta } N_{\gamma , -\delta}  + \frac{g_{\gamma } g_{\delta } } { g_{\alpha +\gamma } }  z_{\alpha + \gamma } N_{\alpha , \gamma } N_{\beta , \delta }, \ \ \ \  \mbox{if} \ \gamma - \beta \notin \Delta^+_{\Phi}   \end{array} \right.
\end{equation}
Note that in \cite{Lohove} the curvature $R$  differs from here by a minus sign, as he is using a different sign convention.  Next let us specialize to the simplest case, namely, when
$${\mathfrak g}=A_r = {\mathfrak s}{\mathfrak l}(r+1) $$
is the space of all traceless complex $(r+1)$ square matrices. A Cartan subalgebra ${\mathfrak h}$  is given by all (traceless) diagonal matrices. The Killing form $B$ is  $B(X,Y)=\mbox{tr}(XY)$. The root system is given by
$\Delta = \{ \alpha_{ij} : 1\leq i,j\leq r+1\}$, where $\alpha_{ij}(H) = H_{ii} - H_{jj}$ for any $H\in {\mathfrak h}$, with a fundamental basis $\{ \alpha_1, \ldots , \alpha_r\}$ where $\alpha_i = \alpha_{i(i\!+\!1)}$. The positive roots are $\Delta^+ = \{ \alpha_{ij} : 1\leq i<j\leq r+1\}$, with $-\alpha_{ij} =\alpha_{ji}$.

Denote by $E_{ij}$ the $(r\!+\!1)\!\times\! (r\!+\!1)$ matrix whose only nonzero entry is $1$ at the $(i,j)$-th position, and write $H_{ij} = E_{ii}-E_{jj}$. Then $\{H_{ij}, E_{ij}\}$ forms a Chevalley basis. Since $[E_{ij}, E_{ji}] = H_{ij}$, we know that $ z_{\alpha } =1 $ for all $\alpha \in \Delta$. Thus the square norm $g(E_{\alpha }, \overline{E}_{\alpha }) = g_{\alpha }$.

To simplify our further discussions, let us introduce the following notations. For any $\alpha < \gamma $ in $\Delta^+$, we will denote by
\begin{eqnarray*}
& & \gamma \sqcup \alpha  \iff  \gamma =\alpha_{ij}, \ \alpha = \alpha_{jk}  \ \ \mbox{for  some} \ 1\leq i < j < k\leq r+1 \\
& & \gamma \sqsupset' \alpha \iff \gamma =\alpha_{ik}, \ \alpha = \alpha_{ij} \  \ \mbox{for  some} \ 1\leq i < j < k\leq r+1 \\
& & \gamma \sqsupset'' \alpha \iff \gamma =\alpha_{ik}, \ \alpha =  \alpha_{jk} \ \ \mbox{for  some} \ 1\leq i < j < k\leq r+1 \\
& & \gamma \sqsupset \alpha \iff \gamma \sqsupset' \alpha \  \ \  \mbox{or} \ \  \gamma \sqsupset'' \alpha
\end{eqnarray*}
Since $B(H_{ij}, H_{kl}) = \mbox{tr}\{ (E_{ii}-E_{jj}) (E_{kk}-E_{ll})\} = \delta_{ik}+\delta_{jl} -\delta_{il}- \delta_{jk}$,
we get $B(H_{\alpha} , H_{\alpha})=2$ for each $\alpha$, and for any $\alpha < \gamma $ in $\Delta^+$, we have
\begin{equation*}
B(H_{\alpha} , H_{\gamma}) = \left\{ \begin{array}{lll}   -1, \ \ \mbox{if} \  \gamma \sqcup \alpha \\ \ \ 1, \ \ \mbox{if} \  \gamma \sqsupset \alpha \\ \ \ 0 , \ \ \mbox{otherwise}  \end{array} \right.
\end{equation*}
Also, since $[E_{ij}, E_{kl}] = \delta_{jk}E_{il} - \delta_{il}E_{kj}$, we get that for any $\alpha < \gamma $ in $\Delta^+$,
\begin{equation*}
N_{\alpha,\gamma} = \left\{ \begin{array}{ll}   -1, \ \ \mbox{if} \  \gamma \sqcup \alpha \\  \ \ 0 , \ \ \mbox{otherwise}  \end{array} \right.
\end{equation*}
Also, for any $\alpha < \beta \in \Delta^+$,
\begin{equation*}
N_{\alpha,-\beta } = \left\{ \begin{array}{lll}   -1, \ \ \mbox{if} \  \gamma \sqsupset'' \alpha \\  \ \ 1, \ \ \mbox{if} \  \gamma \sqsupset' \alpha \\  \ \ 0 , \ \ \mbox{otherwise}  \end{array} \right.
\end{equation*}
Note that for $\delta < \gamma \in \Delta^+$, we have $N_{\gamma , -\delta } = - N_{-\gamma , \delta } = N_{\delta , - \gamma}$.
Putting all these info into the Itoh-Lahove curvature formula, we get $R_{\alpha \overline{\alpha } \alpha \overline{\alpha }} =2g_{\alpha}$, and for any $\alpha < \gamma $ in $\Delta^+_{\Phi}$,
\begin{equation*}
R_{\alpha \overline{\alpha } \gamma \overline{\gamma }} = \left\{ \begin{array}{lll}   - \frac{ g _{\alpha } g_{\gamma }} { g_{\alpha + \gamma }} , \ \ \mbox{if} \  \gamma \sqcup \alpha \\
  \ \ \ \  g_{\alpha } , \ \ \mbox{if} \  \gamma \sqsupset \alpha \\
\ \ \ \ \ 0 , \ \ \mbox{otherwise}  \end{array} \right.
\end{equation*}
Also, for $\alpha < \beta < \delta < \gamma $ in $\Delta^+_{\Phi}$ with $\alpha + \gamma = \beta +\delta $, only two cases will result in nonzero values for $R_{\alpha \overline{\beta } \gamma \overline{\delta }}$, namely,  either when $ \gamma \sqcup \alpha = \delta \sqcup \beta $ and $\gamma \sqsupset' \delta$, or when $\beta \sqsupset' \alpha $, $\delta \sqsupset'' \alpha $,  and $\gamma = \beta + \delta - \alpha$. In the first case the curvature equals to $-\frac{ g_{\alpha } g_{\delta } } {g_{\alpha + \gamma } }$, and in the second case the curvature equals to $g_{\alpha}$. Note that these two cases can be described equivalently as: there exist $1\leq i<p<q<k\leq r+1$ such that $\delta = \alpha_{ip}$, $\beta = \alpha_{pk}$,
$\gamma = \alpha_{iq}$, $\alpha = \alpha_{qk}$ for the first case, while $\delta = \alpha_{iq}$, $\beta = \alpha_{pk}$,
$\gamma = \alpha_{ik}$, $\alpha = \alpha_{pq}$ for the second case.

Now let us switch to the unitary frame $\tilde{E}_{\alpha} = \frac{ E_{\alpha } } { \sqrt{ g_{\alpha}} } $ of ${\mathfrak m}^+$. For the sake of convenience, we will still use $R_{\alpha \overline{\beta } \gamma \overline{\delta }}$ to denote the curvature component $R(\tilde{E}_\alpha ,\overline{\tilde{E}_{\beta }} ,\tilde{E}_{\gamma}  ,\overline{ \tilde{E}_{\delta }} )$. Also, to avoid clumsy notations, we will write $g_{\alpha_{ik}} $ simply as $g_{ik}$. Up to the K\"ahler symmetries, the only non-zero components of the curvature are
\begin{eqnarray}
R_{\alpha \overline{\alpha } \alpha \overline{\alpha }} & = & \frac{2}{g_{\alpha }} , \ \ \ \alpha \in \Delta^+_{\Phi} ; \\
R_{\alpha \overline{\alpha } \gamma \overline{\gamma }} & = &  \left\{ \begin{array}{ll}   - \frac{1} { g_{ik} } , \ \ \mbox{if} \ \exists \ i<j<k :  \ \gamma =\alpha_{ij}, \ \alpha = \alpha_{jk} \\
  \ \  \frac{1}{ g_{ik} } , \ \ \mbox{if} \  \ \exists \ i<j<k :  \ \gamma =\alpha_{ik}, \ \alpha = \alpha_{ij}\ \mbox{or}\ \alpha_{jk}   \end{array} \right.
\end{eqnarray}
where we assumed $\alpha < \gamma$. For $\alpha < \beta < \delta < \gamma $ in $\Delta^+_{\Phi}$, the curvature component $R_{\alpha \overline{\beta } \gamma \overline{\delta }}$ will be equal to the following non-zero values only when there are $1\leq i<p<q<k\leq r+1$  such that
 \begin{equation}
  R_{\alpha \overline{\beta } \gamma \overline{\delta }}  = \left\{ \begin{array}{ll}   - \frac{    \sqrt{  g_{ip} g_{qk} }     } {    g_{ik}   \sqrt{  g_{iq} g_{pk} }  } , \ \ \mbox{if} \   \ \delta = \alpha_{ip}, \beta = \alpha_{pk}, \gamma =\alpha_{iq}, \ \alpha = \alpha_{qk} \\
  \ \  \frac{    \sqrt{  g_{pq}  }     } {       \sqrt{  g_{ik} g_{iq} g_{pk} }  } , \ \ \mbox{if} \    \ \delta = \alpha_{iq}, \beta = \alpha_{pk}, \gamma =\alpha_{ik}, \ \alpha = \alpha_{pq}   \end{array} \right.
\end{equation}

Now check the sign for CQB or $^d$CQB. First let us consider the case when $\Phi =\{ \alpha_1, \ldots , \alpha_r\}$, namely, when $M^n = SU(r+1)/{\mathbb T}$ is the flag manifold, where ${\mathbb T}$ is a maximal torus. We have $n=\frac{1}{2} r(r+1)$, $b_2=r$, and $\Delta^+_{\Phi} =\Delta^+ $. We will choose $g$ to be the K\"ahler-Einstein metric. In this case, all $c_j=1$, and $g_{\alpha_{ik}} = k-i$. It is easy to see that the Ricci curvature is constantly equal to $\mu =2$.

For any symmetric $n\times n$ matrix $A$, the quadratic form $\langle Q(A), \overline{A} \rangle = \sum_{a,b,c,d=1}^n R_{a\overline{b}c\overline{d}} A_{ac} \overline{A_{bd}}\ $ is equals to
\begin{eqnarray}
\nonumber & & \sum_{\alpha } R_{\alpha \overline{\alpha } \alpha \overline{\alpha }} |A_{\alpha \alpha }|^2 +  \sum_{\alpha < \gamma } 4 R_{\alpha \overline{\alpha } \gamma \overline{\gamma }} |A_{\alpha \gamma }|^2 +  \sum_{\alpha < \beta < \delta < \gamma } 8\Re \{   R_{\alpha \overline{\beta } \gamma \overline{\delta }}   A_{\alpha \gamma } \overline{   A_{\beta \delta }   }     \} \\ \nonumber
& = & \sum_{\alpha } \frac{2} { g_{\alpha }}  |A_{\alpha \alpha }|^2 +  \sum_{i<j<k} \frac{4}{g_{ik}} \big( |A_{ij,ik}|^2 + |A_{jk,ik}|^2 - |A_{jk,ij}|^2  \big) + \\
& & + \sum_{i<p<q<k} 8\Re\{ -\frac{ \sqrt{g_{ip} g_{qk}  } } { g_{ik} \sqrt{  g_{iq} g_{pk} } }  A_{qk,iq} \overline{A_{pk,ip} } + \frac{ \sqrt{g_{pq}  } } {  \sqrt{  g_{ik} g_{iq} g_{pk} } }  A_{pq,ik} \overline{A_{pk,iq} } \}
\end{eqnarray}
Let us denote by $X$ and $Y$ the two terms in the last line above. We have
$$ \mbox{CQB}_{\tilde{E}}(A) = \mu ||A||^2 - \langle Q(A), \overline{A}\rangle , \ \ ^d\mbox{CQB}_{\tilde{E}}(A) = \mu ||A||^2 + \langle Q(A), \overline{A}\rangle.$$
In order to check that CQB $\geq 0$ and $^d$CQB $> 0$ for  $(SU(r+1)/{\mathbb T}, g)$, the flag manifold of type A with Einstein metric, it suffices to take care of the two crossing terms $X$ and $Y$. For $Y$, the square root part of the coefficient is less than $\frac{1}{2}$, so we have
\begin{equation*}
|Y| \leq  \sum_{i<p<q<k} 4 |A_{pq,ik} \overline{A_{pk,iq} } | \leq    \sum_{i<p<q<k}  2|A_{pq,ik} |^2 + 2| A_{pk,iq}  |^2 \end{equation*}
Note that in $2||A||^2 = 2\langle A, \overline{A} \rangle =\sum_{\alpha } |A_{\alpha \alpha }|^2 + 4\sum_{\alpha < \gamma} |A_{\alpha \gamma }|^2 $, each $|A_{pq,ik} |^2$ term or $| A_{pk,iq}  |^2$ term will appear $4$ times, so the $Y$ term will be dominated by $\mu ||A||^2$ from above or below. For the $X$ term, let us fix $i<k$ with $k-i=t+1 \geq 2$. Write $A_{ip,pk}=Z_{p}$, and write $p'=p-i$. Since the square root part of the coefficient of $X$ is less than $1$, we have
$$ |X| \leq \sum_{i<k} \sum_{1\leq p'<q'\leq t} \frac{4}{t+1} ( |Z_{p}|^2 + |Z_{q}|^2 ) = \sum_{i<p<k} \frac{4(t-1)}{t+1}|Z_p|^2  .$$
Again since for each $i<j<k$, the term $|A_{ij,jk} |^2 = |Z_j|^2$ will appear $4$ times in $\mu ||A||^2$, the $X$ term will be dominated by $\mu ||A||^2$ from above and below. Note that for the lower bound part, the term $|Z_p|^2$ will also emerge from the bisectional curvature terms, with coefficient $-\frac{4}{t+1}$. We have
$-\frac{4(t-1)}{t+1} - \frac{4}{t+1} = - \frac{4t}{t+1} > -4$, so $^d$CQB will be nonnegative, and actually positive since its vanishing would imply $A=0$. We have thus proved Theorem 1.6 stated in the introduction.

Note that if $A$ has only non-trivial entries along the diagonal line for the simple roots, then $\langle Q(A), \overline{A} \rangle =2||A||^2$, so CQB is only nonnegative and not positive.

\vspace{0.3cm}

Next let us give a non-symmetric example of irreducible K\"ahler C-space with $b_2>1$ that has positive CQB. The smallest dimensional Type $A$ space which is non-symmetric and not a ${\mathbb P}^1$ bundle would be $M^{12} = SU(6)/S(U(2)\times U(2) \times U(2))$, or equivalently, $(A_5, \Phi )= ({\mathfrak s}{\mathfrak l}_6 , \Phi )$ where $\Phi = \{ \alpha_2, \alpha_4\}$. It has $n=12$ and $b_2=2$. We have
$$ \Delta^+_{\Phi }= \{ \alpha_{kl} \mid 1\leq k<l\leq 6\} \setminus \{ \alpha_{12}, \alpha_{34} , \alpha_{56}\} .$$
Up to a scaling, the K\"ahler-Einstein metric $g$ has components  $g_{kl} = g_{\alpha_{kl}} = \sum_{\beta \in \Delta^+_{\Phi} } B(\alpha_{kl}, \beta )$, so we have
\begin{eqnarray*}
 g_{13}=g_{14}=g_{23}=g_{24} =2, \\
 g_{35}=g_{36}=g_{45}=g_{46}=2, \\
 g_{15} = g_{16} = g_{25}=g_{26} =4.
 \end{eqnarray*}
Let us denote by $\Delta_1=\{ \alpha_{15}, \alpha_{16}, \alpha_{25}, \alpha_{26} \}$ and $\Delta_2=\Delta^+_{\Phi } \setminus \Delta_1$.

So the curvature components are $R_{\alpha \overline{\alpha} \alpha \overline{\alpha}} = \frac{2}{g_{\alpha }}$, which is $  \frac{1}{2}$ for $\alpha \in \Delta_1$ and $1$ for $\alpha \in \Delta_2$. While $R_{\alpha \overline{\alpha} \gamma \overline{\gamma }}$ are given by $(4.5)$. It is easy to see that the Ricci curvature is constantly $\mu =2$ in this case. The crossing terms $R_{\alpha \overline{\beta} \gamma \overline{\delta }}$ are given by $(4.6)$. We have
$$ \mu ||A||^2 = \sum_{\alpha } 2| A_{\alpha \alpha }|^2 + \sum_{\alpha < \gamma } 4 |A_{\alpha \gamma }|^2. $$
Now consider the quadratic form $\langle Q(A), \overline{A}\rangle $ given by $(4.7)$.  Let us examine the two terms $X$ and $Y$ in the last line of $(4.7)$. For the term $Y$, note that $i<p<q<k$ could be from $(1,2,3,4)$, $(3,4,5,6)$, in which case the square root part of the coefficient is $\frac{1}{2}$,  or from $(1,2,3,5)$, $(1,2,3,6)$, $(1,2,4,5)$, $(1,2,4,6)$, $(1,2,5,6)$, $(1,3,5,6)$, $(1,4,5,6)$, $(2,3,5,6)$, or $(2,4,5,6)$. In each of these last $9$ cases the square root part of the coefficient for the $Y$ terms is $\frac{1}{4}$. So we have
\begin{equation*}
 |Y|  \leq  2 \sum_{i<p<q<k} | A_{ik, pq} \overline{A_{iq, pk}}| \leq  \sum_{i<p<q<k} ( |A_{ik,pq}|^2 + |A_{iq,pk}|^2 ).
\end{equation*}
Here in the sum we are skipping those terms with $(p,q)=(3,4)$. Note that in $\mu ||A||^2$, each of the terms $|A_{iq,pk}|^2$ appears with coefficient $4$, so $|Y|$ is strictly dominated by $\mu ||A||^2$ from above and below. Next let us consider the $X$ terms. For each of $\alpha =\alpha_{qk}$, $\beta =\alpha_{pk}$, $\gamma =\alpha_{iq}$, $\delta =\alpha_{ip}$ to be in $\Delta^+_{\Phi}$, the indices $i<p<q<k$ could only take the following four cases: $(1,3,4,5)$, $(1,3,4,6)$, $(2,3,4,5)$, $(2,3,4,6)$. In each case, the square root part of the coefficient is $1$, while $g_{ik} = 4$, so we have
$$ |X| \leq  \sum_{i=1}^2 \sum_{k=5}^6  |A_{i3,3k}|^2 + |A_{i4,4k}|^2 .$$
So each of these $|A_{i3,3k}|^2$ or $|A_{i4,4k}|^2$ term in the quadratic form will be strictly dominated by that from $\mu ||A||^2$ from both sides. The other terms are clearly strictly dominated by $\mu ||A||^2$ from both above and below. So $(M^{12},g)$ has positive CQB and positive $^d$CQB, and we have completed the proof of Theorem 1.8 stated in the introduction.

\section{Non-positive cases}

One may also consider K\"ahler manifolds with non-positive CQB or $^d$CQB.  Similar to the nonnegative cases, we have the following results:

\begin{theorem} Let $(M, g)$ be a K\"ahler manifold with CQB$\mbox{}_1\le 0$. Then for any $X, Y\in T_x'M$
\begin{equation}\label{eq:bi-2}
|X|^2 Ric(Y, \overline{Y})-R(X, \overline{X}, Y, \overline{Y})\le 0.
\end{equation}
The above holds as strict inequality (for nonzero $X$, $Y$) if CQB$\mbox{}_1<0$. In particular
$\Ric(Y,\overline{Y})\le \frac{1}{n-1}\Ric^\perp(Y,\overline{Y})\le 0$.

Similarly, if $(M,g)$ is K\"ahler with $\mbox{}^d\!$CQB$\mbox{}_1\le 0$, then for any $X, Y\in T_x'M$
\begin{equation}\label{eq:bi-2a}
|X|^2 Ric(Y, \overline{Y})+R(X, \overline{X}, Y, \overline{Y})\le 0,
\end{equation}
and the inequality is strict (for nonzero $X$, $Y$) when $\mbox{}^d\!$CQB$\mbox{}_1< 0$. In particular, it holds that
$\Ric(Y,\overline{Y})\le \frac{1}{n+1}\Ric^+(Y,\overline{Y})\le 0$.

A product K\"ahler manifold $M=M_1\times M_2$ has CQB$\mbox{}<0$ (or $\le 0$, or $\mbox{}^d\!$CQB$\mbox{}< 0$, or $\mbox{}^d\!$CQB$\mbox{}\le 0$) if and only if each factor is so. For any positive integer $k$, $M$ has CQB$\mbox{}_k$ (or $\mbox{}^d\!$CQB$\mbox{}_k$) $<0$ or $\le 0$ if and only if each  factor is so.
\end{theorem}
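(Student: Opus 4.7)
The plan is to run the proofs of Theorem~\ref{thm:21}, its $^d$CQB analogue, and Corollaries~\ref{coro:1} and~\ref{coro:22} verbatim in the non-positive setting; none of those arguments uses positivity in an essential way---only the definiteness of the quadratic form on rank-one maps and the Ricci bounds it induces.

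For (\ref{eq:bi-2}), given $X\in T_x'M\setminus\{0\}$ and $Y\in T_x'M$, I would pick a unitary frame $\{E_\alpha\}$ with $E_1=X/|X|$ and take the rank-one map $A(\overline E_1)=Y$, $A(\overline E_\alpha)=0$ for $\alpha\ge 2$. Substituting into (\ref{eq:cqb1}) gives $\mbox{CQB}(A)=\Ric(Y,\overline Y)-|X|^{-2}R(X,\overline X,Y,\overline Y)$, so $\mbox{CQB}_1\le 0$ forces (\ref{eq:bi-2}), strictly whenever $\mbox{CQB}_1<0$ and $X,Y$ are nonzero. Now take unit $Y$ with $E_1=Y$. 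Setting $X=E_1$ in (\ref{eq:bi-2}) gives $\Ric(Y,\overline Y)\le H(Y)$, i.e.\ $\Ric^\perp(Y,\overline Y)\le 0$; summing (\ref{eq:bi-2}) over $X=E_i$, $i=2,\dots,n$, and using $\sum_{i=2}^n R(E_i,\overline E_i,Y,\overline Y)=\Ric^\perp(Y,\overline Y)$, yields $(n-1)\Ric(Y,\overline Y)\le\Ric^\perp(Y,\overline Y)$, which is the chain $\Ric\le\frac{1}{n-1}\Ric^\perp\le 0$. The $^d$CQB argument is parallel: the rank-one $A\colon T'M\to T''M$ with $A(E_1)=\overline Y$ plugged into (\ref{eq:dcqb1}) gives (\ref{eq:bi-2a}); taking $X=Y$ unit gives $\Ric^+(Y,\overline Y)\le 0$, and summing (\ref{eq:bi-2a}) over $X=E_i$, $i=2,\dots,n$, yields $n\Ric(Y,\overline Y)\le H(Y)$, hence $(n+1)\Ric(Y,\overline Y)\le\Ric^+(Y,\overline Y)$, i.e.\ $\Ric\le\frac{1}{n+1}\Ric^+\le 0$.

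For the product splittings I would reuse the decomposition already derived in the proof of Corollary~\ref{coro:1},
\begin{align*}
\mbox{CQB}^M(A) &= \mbox{CQB}^{M_1}(A')+\mbox{CQB}^{M_2}(A'') \\
& \quad +\sum_{i,j,\alpha}\Ric_{i\overline j}A_{\alpha i}\overline{A_{\alpha j}}+\sum_{\alpha,\beta,i}\Ric_{\alpha\overline\beta}A_{i\alpha}\overline{A_{i\beta}},
\end{align*}
and its $^d$CQB counterpart. The first two parts of the theorem already imply that a factor satisfying $\mbox{CQB}_1\le 0$ (or $^d\mbox{CQB}_1\le 0$) has Ricci $\le 0$, so the two cross Ricci terms are now $\le 0$; hence if both $M_1,M_2$ satisfy $\mbox{CQB}\le 0$ then $\mbox{CQB}^M(A)\le 0$, while the converse follows by choosing $A$ supported on a single factor so that the cross terms and the other $\mbox{CQB}^{M_j}$ vanish. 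The strict versions carry over since, when $\mbox{CQB}<0$ on each factor, the Ricci on each factor is strictly negative, making the cross terms strictly negative as soon as the off-diagonal block of $A$ is nontrivial, while each $\mbox{CQB}^{M_j}$ is already strictly negative on nonzero blocks. For $\mbox{CQB}_k$ or $^d\mbox{CQB}_k$ the same argument works, since restricting $A$ to either factor cannot increase its rank.

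The only step that demands attention---the main obstacle, though a mild one---is the direction-of-sign bookkeeping for the cross Ricci terms in the product decomposition; once $\Ric\le 0$ on each factor has been extracted from the first two parts, every inequality in the Section~2 proofs flips uniformly and the argument transfers.
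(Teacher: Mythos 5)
Your proposal is correct and follows exactly the route the paper takes: its proof of this theorem is literally ``the proof is exactly the same as that of Theorem~\ref{thm:21}'' with the signs flipped, which is precisely your rank-one test map $A(\overline{E}_1)=Y$ (resp.\ $A(E_1)=\overline{Y}$), the summation over $X=E_i$ for $i\ge 2$ to get the $\frac{1}{n-1}\Ric^\perp$ and $\frac{1}{n+1}\Ric^+$ bounds, and the product decomposition of Corollary~\ref{coro:1} with the cross Ricci terms now nonpositive. Your additional care with the strict case on the off-diagonal blocks is a correct filling-in of a detail the paper leaves implicit.
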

\begin{proof}
The proof is exactly the same as that of Theorem \ref{thm:21}.
\end{proof}

\begin{theorem}Assume that $(M, g)$ be a compact K\"ahler manifold with CQB$\mbox{}_1\le 0$ (or $\mbox{}^d\!$CQB$\mbox{}_1\le 0$). Assume that the universal cover  $\tilde{M}$ does not have a flat de Rham factor. Then $M$ must admit a metric with $\Ric<0$. In fact the K\"ahler-Ricci flow evolves the metric $g$ into a K\"ahler metric $g(t)_{t\in(0, \epsilon)}$ with negative Ricci curvature for some $\epsilon$.
\end{theorem}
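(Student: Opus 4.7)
The plan is to mirror the B\"ohm--Wilking-style argument of Theorem \ref{thm:31} with all Ricci inequalities reversed, choosing sign conventions so that the analogous convex sets remain invariant under the K\"ahler--Ricci flow. I will focus on the CQB$\mbox{}_1\le 0$ case; the $\mbox{}^d$CQB$\mbox{}_1\le 0$ case is entirely parallel, using (\ref{eq:bi-2a}) in place of (\ref{eq:bi-2}). First, let $g(t)$ solve $\partial_t g_{\alpha\bar\beta} = -R_{\alpha\bar\beta}$ with initial data $g$, and introduce the family $C(t)$ of algebraic K\"ahler curvature operators satisfying
\[
\Ric(X,\overline{X}) \le 0,\quad |\Ric(X,\overline{Y}) - R_{X\overline{Y}Z\overline{Z}}|^2 \le (D_1+tE_1)\Ric(X,\overline{X})\Ric(Y,\overline{Y}),\quad \|R\|\le D_2+tE_2,
\]
for all $X,Y\in T'_xM$ and all unit $Z$, with constants $D_1,E_1,D_2,E_2>0$ to be selected. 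Under the first inequality the product on the right of the second is nonnegative, so the estimate is sensible.

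Next I would verify convexity and initial compatibility. The convexity of $C(t)$ follows from the identical Cauchy--Schwarz/AM--GM calculation of Theorem \ref{thm:31}, since only the sign of $\Ric$ changes while the relevant products remain nonnegative. For $t=0$, inequality (\ref{eq:bi-2}) says that for every fixed unit $Z$ the Hermitian form $A(X,\overline{Y}):=\Ric(X,\overline{Y}) - R_{X\overline{Y}Z\overline{Z}}$ is nonpositive. Diagonalizing $-A$ with nonnegative eigenvalues and running the trace argument of Theorem \ref{thm:21} yields
\[
|A_{i\bar j}x^i\bar y^j|^2 \le (-A)(X,\overline{X})\,(-A)(Y,\overline{Y}) \le (n-1)^2 \Ric(X,\overline{X})\Ric(Y,\overline{Y}),
\]
so the second inequality holds at $t=0$ with $D_1=(n-1)^2$; the third holds for any $D_2\ge \|R(g)\|_\infty$.

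The key invariance check mirrors that of Theorem \ref{thm:31}. If $\Ric(X,\overline{X})$ drops to $0$ for some $X$, the second condition forces $A(X,\overline{Y})=0$ for every $Y$ and every unit $Z$, hence $\Ric(X,\overline{Y})=R_{X\overline{Y}Z\overline{Z}}$ for all $Y,Z$; polarization in $Z$ then yields $R_{X\overline{Y}Z\overline{W}}=0$ for all $Y,Z,W$, just as in Theorem \ref{thm:31}. The Ricci evolution (\ref{eq:35}) now gives $\partial_t\Ric(X,\overline{X})=R_{X\overline{X}p\bar q}R_{q\bar p}=0$, so $\Ric(X,\overline{X})$ cannot become positive. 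For the second condition, the time-derivative computation produces the analogue of (\ref{eq:wanted}), and the auxiliary estimates (\ref{eq:help1})--(\ref{eq:help3}) survive verbatim with $\Ric$-quantities replaced by their absolute values, since the Cauchy--Schwarz bound $|R_{X\overline{Y}Z\overline{Z}}|\le 4D_1\sqrt{\Ric(X,\overline{X})\Ric(Y,\overline{Y})}$ depends only on the magnitude of the nonnegative product $\Ric(X,\overline{X})\Ric(Y,\overline{Y})$. Selecting $E_1\gg D_1^2 D_2$ and $\epsilon\le 1/E_1$ then makes the dominant term $E_1\Ric(X,\overline{X})\Ric(Y,\overline{Y})\ge 0$ absorb all remaining contributions, establishing invariance.

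Finally, once $g(t)\in C(t)$ for $t\in[0,\epsilon]$ is established via Hamilton's maximum principle, I would invoke the strong maximum principle exactly as in the closing paragraph of Theorem \ref{thm:31}: if $\Ric(g(t_i))$ had a nontrivial kernel for a sequence $t_i\to 0^+$, the second condition would force that kernel to lie in the kernel of the full curvature tensor, so by de Rham the universal cover would split off a flat factor, contradicting the hypothesis. Hence $\Ric(g(t))<0$ for all sufficiently small $t>0$. The main obstacle I anticipate is bookkeeping in the invariance check: while the sign flip leaves the squared quantities and Cauchy--Schwarz bounds untouched, the cross terms $R_{X\bar X p\bar q}R_{q\bar p}$ have indefinite sign, so one must estimate them in absolute value and ensure the threshold constants can still be chosen uniformly; fortunately the proof of Theorem \ref{thm:31} already uses absolute-value estimates throughout, so the transfer should be reasonably mechanical.
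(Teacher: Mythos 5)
Your proposal is correct and follows exactly the route the paper takes: the paper's proof of this theorem is literally ``follow the same argument as Theorem \ref{thm:31} and flip the signs where needed,'' and your write-up carries out precisely that sign-flip (reversing the Ricci inequality, observing that the product $\Ric(X,\overline{X})\Ric(Y,\overline{Y})$ stays nonnegative so the convexity and dominance estimates transfer verbatim, and reusing the strong maximum principle endgame). No gaps; you have simply made explicit the details the paper leaves implicit.
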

\begin{proof}
We can prove the result by following the same argument and flipping the sign when needed in the proof of Theorem \ref{thm:31}.
\end{proof}

Next construct  examples of compact K\"ahler manifolds with negative (non-positive) CQB and $^d$CQB. First of all,
if $M^n$ is a compact quotient of a Hermitian symmetric space $\widetilde{M}$ of non-compact type, then by  \cite{CV}, we see that $M$ always has $\mbox{}^d$CQB $<0$ and CQB $\le 0$, and it will have CQB $<0$ when and only when $\widetilde{M}$ does not have the unit disc as an irreducible factor.

For non-locally Hermitian symmetric examples, we adapt the construction of strongly negatively curved manifolds by Mostow and Siu \cite{MS} and by the second named author \cite{Z1}, \cite{Z2}. To state the result, let us recall the notion of  {\em good coverings}.

A finite branched cover $f:M^n \rightarrow N^n$ between two compact complex manifolds is called a good cover, if for any $p\in M$, there exists locally holomorphic coordinates $(z_1, \ldots , z_n)$ centered at $p$ and $(w_1, \ldots , w_n)$ centered at $f(p)$, such that $f$ is given by $w_i=z_i^{m_i}$, $1\le i\le n$, where $m_i$ are positive integers. Note that the branching locus $B$ and ramification locus $R$ are necessarily normal crossing divisors in this case.

In \cite{MS}, Mostow and Siu computed the curvature for the Bergman metric of the Thullen domain $\{ |z_1|^{2m} + |z_2|^2<1\}$, and used it to construct examples of strongly negatively curved surfaces which is not covered by ball. In \cite{Z1}, the second named author generalized this to higher dimensions, and also at the quotient space level using the Poincar\'e distance, and showed that (see Theorem 1 of \cite{Z1}) if $N$ is a compact smooth quotient of the ball, and $B\subset N$ a smooth totally geodesic divisor (possibly disconnected), then for any good cover $f: M \rightarrow N$ branched along $B$, $M$ admits a K\"ahler metric with negative complex curvature operator. We will use this computation to claim the following:
\begin{theorem}
Let $N^n$ ($n\geq 2$) be a smooth compact quotient of the ball, equipped with the complex hyperbolic metric, and let $B\subset N$ be a smooth totally geodesic divisor (possibly disconnected). If $f: M\rightarrow N$ is a good cover branched along $B$, then $M$ admits a K\"ahler metric $g$ which has negative CQB and negative $^d\!$CQB.
\end{theorem}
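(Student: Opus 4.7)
\emph{Proof plan.} The input is Theorem~1 of \cite{Z1}, which asserts that the metric $g$ on $M$ is K\"ahler with strictly negative complex curvature operator, and in particular strictly negative Ricci, uniformly on the compact manifold $M$. The goal is to upgrade this to the two quadratic-form inequalities $\mbox{CQB}<0$ and $\mbox{}^d\mbox{CQB}<0$.

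As a warm-up I would first verify the conclusion for the unperturbed model, the complex hyperbolic quotient $N$. In a unitary frame the curvature is $R_{\alpha\bar\beta\gamma\bar\delta}=-\tfrac{c}{2}(\delta_{\alpha\beta}\delta_{\gamma\delta}+\delta_{\alpha\delta}\delta_{\gamma\beta})$ with $c>0$ the negative of the holomorphic sectional curvature, and $\mbox{Ric}=-\tfrac{(n+1)c}{2}\,g$. Writing $A(\bar E_\alpha)=\sum_i A_{i\alpha}E_i$ and substituting into (\ref{eq:cqb1}) collapses, after the two Kronecker-delta contractions, to
\[
\mbox{CQB}(A)=-\frac{nc}{2}\|A\|^2+\frac{c}{2}\sum_{i,\alpha}A_{i\alpha}\bar A_{\alpha i}.
\]
The cross trace $\sum A_{i\alpha}\bar A_{\alpha i}$ is real and bounded in absolute value by $\|A\|^2$ via Cauchy--Schwarz, so $\mbox{CQB}(A)\le -\tfrac{(n-1)c}{2}\|A\|^2<0$ for $n\ge 2$. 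The parallel substitution into (\ref{eq:dcqb1}) gives an even stronger bound $\mbox{}^d\mbox{CQB}(A)\le -\tfrac{(n+1)c}{2}\|A\|^2<0$. These are the model inequalities.

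Next I would transfer them to $(M,g)$. Outside a tubular neighborhood of the ramification divisor $R\subset M$, the map $f$ is a local isometry, so the above pointwise estimates apply verbatim. Near $R$, the metric is the Thullen-type modification built in \cite{MS} and generalised in \cite{Z1}, and the content of Theorem~1 of \cite{Z1} is precisely a pointwise strictly negative-definite bound on the complex curvature operator together with a uniform strictly negative Ricci bound. Plugging these two bounds back into the sums (\ref{eq:cqb1}) and (\ref{eq:dcqb1}) and reapplying the Cauchy--Schwarz manoeuvre to the cross-term contributions exactly as in the model case should yield $\mbox{CQB}(A)<0$ and $\mbox{}^d\mbox{CQB}(A)<0$ at every point of $M$ and for every nonzero $A$, uniformly by compactness.

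The principal obstacle is the near-$R$ step: one must check that after the Thullen-type modification the strictly negative Ricci term still dominates the off-diagonal curvature-operator cross trace in the sense needed for the Cauchy--Schwarz bound to close. This reduces to a finite collection of pointwise inequalities between the modified curvature coefficients computed in \cite{Z1}, structurally identical to the Calabi--Vesentini-type estimates that drive the computation on irreducible Hermitian symmetric spaces of noncompact type; once verified, uniform negativity on the compact manifold $M$ is immediate.
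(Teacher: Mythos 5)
Your overall architecture matches the paper's: compute the model case on the complex space form, then treat a tubular neighborhood $V$ of the ramification locus separately, and close by taking the perturbation parameter $\eps$ small so that negativity persists on $M\setminus V$ (your statement that $f$ is a local isometry there is slightly loose --- the metric is $\omega_\eps$, a global modification of $f^*\omega_N$, and one argues by continuity of CQB and $^d$CQB in the curvature as $\eps\to 0$; the paper's near-$R$ estimate notably holds for \emph{every} $\eps>0$, and only the complement requires $\eps$ small). The model computation you give is correct.

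The genuine gap is the step you yourself flag as ``the principal obstacle'' and then do not carry out. Your assertion that one can plug ``a pointwise strictly negative-definite bound on the complex curvature operator together with a uniform strictly negative Ricci bound'' into (\ref{eq:cqb1}) and close the estimate by Cauchy--Schwarz ``exactly as in the model case'' is not correct as a general principle for CQB. Writing $-\mbox{CQB}(A)=P-Q$ with $P$ the (positive) contribution of $-\Ric$ and $Q=-\sum R_{i\bar jk\bar\ell}A_{ik}\overline{A_{j\ell}}$, a negative curvature operator makes $Q>0$, i.e.\ it works \emph{against} the sign you want: you need the quantitative domination $P>Q$, not merely the two qualitative negativity statements. (For $^d$CQB one needs $P+Q>0$, so there the qualitative statements do suffice.) The content of the paper's proof is precisely this domination: near $R$ the only nonzero components in a suitable orthogonal frame are $-R_{1\bar11\bar1}=b$, $-R_{1\bar1i\bar i}=c$, $-R_{i\bar ii\bar i}=2e$, $-R_{i\bar ij\bar j}=e$ with $b,c,e>0$ and the key inequality $nbe>(n-1)^2c^2$ from \cite{Z1}; expanding $P-Q$ termwise, the only terms not manifestly nonnegative are $\sum_{i>1}\bigl((b+(n-2)c)\,|A_{i1}|^2+ne\,|A_{1i}|^2-2c\,\Re(A_{i1}\overline{A_{1i}})\bigr)$, and these are controlled exactly because $nbe>c^2$. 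Without importing and using that specific inequality, the argument does not close, so the proof as proposed is incomplete at its decisive point.
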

\noindent {\em Remark:} Such a manifold $M$ is not homotopy equivalent to any locally Hermitian symmetric space,  and it is strongly rigid in the sense of Siu, namely, any compact K\"ahler manifold homotopy equivalent to $M$ must be (anti)biholomorphic to $M$.
\begin{proof}
The construction of the K\"ahler metrics $\omega_{\eps}$ is exactly the same as in the proof of Theorem 1 of \cite{Z1}. Notice that at the point $p$ in a tubular neighborhood $V$ of the ramification locus $R$, there exists tangent frame $e$ at $p$ such that $e_i \perp e_j$ whenever $i\neq j$, and under $e$ the only non-zero curvature components of $\omega_{\eps}$ are $R_{i\overline{i}j\overline{j}}$, with
$$ -R_{1\overline{1}1\overline{1}} =b, \ \ -R_{1\overline{1}i\overline{i}} =c, \ \ -R_{i\overline{i}i\overline{i}} =2e, \ \ -R_{i\overline{i}j\overline{j}} =e $$
for any $2\le i<j$. It was shown that $b>0$, $c>0$, $e>0$, and $nbe > (n-1)^2c^2$.

Note that if we normalize $e$, namely, replace $e_k$ by $\frac{e_k}{|e_k|}$ for each $k$, then the above inequalities on $b$, $c$, and $e$ still holds. So let us assume that $e$ is unitary at $p$. For any non-trivial $n\times n$ matrix $A$, we have
$ -\mbox{CQB}_e(A) = P - Q$,  and $-\,^d\mbox{CQB}_e(A) = P + Q$,  where
\begin{eqnarray*}
P &= & -\sum_{i,j,k,\ell} R_{i\overline{j}k\overline{k}} A_{\ell i} \overline{A_{\ell j}}
\ = \  -\sum_{i, k, \ell} R_{i\overline{i}k\overline{k}} |A_{\ell i}|^2\\
& = & (b + (n-1)c) \sum_{\ell} |A_{\ell 1}|^2 + (c+ne) \sum_{i>1, \ell}|A_{\ell i}|^2 \\
Q &= & -\sum_{i,j,k,\ell} R_{i\overline{j}k\overline{\ell}} A_{ik} \overline{A_{j\ell}}
\ = \  -\sum_{i} R_{i\overline{i}i\overline{i}} |A_{ii} |^2 - \sum_{i<k} R_{i\overline{i}k\overline{k}} |A_{ik} + A_{ki}|^2 \\
& = & b|A_{11}|^2 + 2e\sum_{i>1} |A_{ii}|^2 + c\sum_{i>1} |A_{1i}+A_{i1}|^2 + e \sum_{1<i<k} |A_{ik}+A_{ki}|^2
\end{eqnarray*}
Clearly, $P+Q>0$ for all $A\neq 0$, and if we write $t_{ij}=|A_{ij}|^2$, we have
\begin{eqnarray*}
P-Q &= & \ \ (n-1)c\, t_{11} \ + \ (c+(n-2)e)\sum_{i,k>1} t_{ik} \ + \\
& & + \sum_{i>1} \left( (b+(n-2)c)\,t_{i1} + ne \,t_{1i} - 2c \, \Re (A_{i1}\overline{A_{1i}}) \right) ,
\end{eqnarray*}
which is positive as $nbe > (n-1)c^2 > c^2$. So the metric $\omega_{\eps}$ has CQB $<0$ and $^d$CQB $<0$ in $V$, for {\em any} $\eps >0$. By choosing $\eps$ sufficiently small, one see that CQB and $^d$CQB will be  negative everywhere in $M$.
\end{proof}

By \cite{Z1} and \cite{Z2}, we see that there are many examples of such $M$ in $n=2$. An example in  $n=3$ was constructed by M. Deraux in \cite{Deraux}, and we are not aware of any higher dimensional such constructions, even though it has been widely believed that there should be plenty in all dimensions.

\section*{Acknowledgments} { We would like to thank Professors Hung-Hsi Wu for his interest to this work, Richard Hamilton for suggesting the Riemannian versions of (\ref{eq:bi}) and (\ref{eq:bi2}).
}

\end{document}